\newtheorem{theorem}{Theorem}[section]
\newtheorem{lemma}[theorem]{Lemma}
\newtheorem{proposition}[theorem]{Proposition}
 \theoremstyle{definition}
\newtheorem{definition}[theorem]{Definition}
\theoremstyle{remark}
\numberwithin{equation}{section}
\begin{document}

\title[A topological rigidity theorem on noncompact Hessian manifolds]
{A topological rigidity theorem on noncompact Hessian manifolds}

\author{Hanzhang Yin}
\address{School of Mathematics, Harbin Institute of Technology,
         Harbin, Heilongjiang 150001, China}
\email{YinHZ@hit.edu.cn}

\author{Bin Zhou}
\address{School of Astronautics, Harbin Institute of Technology,
         Harbin, Heilongjiang 150001, China}
\email{binzhou@hit.edu.cn}
\thanks{The second author is supported by the National Natural Science Foundation of China for Distinguished Young Scholars under (Grant No. 62125303), the National Natural Science Foundation of China "Qisun Ye" Science Foundation (Grant No. U2441243).}



\begin{abstract}

In this work, we obtain a short time solution for a geometric flow on noncompact affine Riemannian manifolds. Using this
result, we can construct a Hessian metric with nonnegative bounded Hessian sectional curvature on some Hessian manifolds with nonnegative Hessian sectional curvature.
Our results can be regarded as a real version of Lee-Tam \cite{LT20}.
As an application, we prove that a complete noncompact Hessian manifold with nonnegative Hessian sectional curvature
is diffeomorphic to $\mathbb{R}^n$ if its tangent bundle has maximal volume growth. This is an improvement of Theorem 1.3 in Jiao-Yin \cite{JY25}.

\noindent{Keywords:} Geometric flow; Affine manifolds; Hessian manifolds; Noncompact manifolds; Topological rigidity

\end{abstract}

\maketitle

\section{Introduction}

Mirghafouri-Malek \cite{r12} studied a geometric flow on compact Hessian manifolds. In this paper, we want to study the geometric flow and the topological rigidity theorem on noncompact Hessian manifolds. We will construct a solution to the flow starting from the given complete
metric $g_0$ with possibly unbounded curvature at infinity, to deal with the possible bad behavior of the metric $g_0$ at infinity we need to make a conformal change to it outside some large compact set $S$. Therefore, we need to study the geometry flow on general affine manifolds.

Let $(M, \nabla, g_0)$ be a complete noncompact affine Riemannian manifold, with $\nabla$ a flat, torsion free connection
and $g_0$ a Riemannian metric on $M$. The following flow, was first studied by Jiao-Yin \cite{JY25}: starting at ${g}_{0}$ is given by
\begin{equation}
\label{e1.2}
\left\{ \begin{aligned}
   &\frac{\partial }{\partial t}{{g}}=-\beta (g)  \\
      &g(0)={{g}_{0}}  \\
\end{aligned} \right.
\end{equation}
on $M$, where $\beta (g)$ is a form associated to the Riemannian metric $g$, which in local affine coordinates is given by
\[
{{\beta }_{ij}}(g)=-{{\partial }_{i}}{{\partial }_{j}}\log \det [g(t)]
\]
In the case when $g_0$ is Hessian, i.e., $g_0$ can be locally expressed
by $g_0=\nabla d\varphi$,
the evolved metrics $g(t)$ along the flow remain Hessian (seeing \cite{r12}) and $-\frac{\beta}{2}$ is called the second Koszul form
of $g(t)$ (see Definition \ref{Kos}).

The affine manifold $M$ is a smooth manifold that admits local coordinate systems such that the changes of the local coordinate systems are affine transformations. There are several important geometric problems on affine manifolds, such as Chern's conjecture (see \cite{Klingler17}).
The Hessian manifold is an affine manifold compatible with a Hessian metric, which is also called affine K\"{a}hler manifolds. It plays a key role in mathematical physics, statistics and information geometry. The reader is referred to the introduction of \cite{JY25} and \cite{r11} for more details on
the geometry and applications of affine/Hessian manifolds.

Affine/Hessian manifolds are closely connected to Hermitian/K\"{a}hler manifolds. Indeed, the
tangent bundle of an affine Riemannian manifold has a complex structure, and we can construct a Hermitian metric. In particular, the Hermitian metric
is K\"{a}hler if and only if the Riemannian metric $g_0$ is Hessian (see Section 2 for more details). Therefore, when $g_0$ is Hessian/Riemannian, \eqref{e1.2} may be regarded as a real analogue of K\"{a}hler/Chern-Ricci flow. In this paper, we apply certain methods from the Chern-Ricci flow to study equation \eqref{e1.2}. The reader is referred to a survey of Tosatti-Weinkove \cite{TW21} and the introduction of \cite{JY25} for more details on the Chern-Ricci flow.


In this paper, we will discuss the existence of \eqref{e1.2} on noncompact affine Riemannian manifolds.
Let $S_A$ be the supremum of $S>0$ so that the flow \eqref{e1.2} has a solution $g(t)$ with initial data $g_0$ such that $g(t)$ is uniformly equivalent to $g_0$ in $M\times [0,S]$
and $S_B$ the supremum of $S>0$ such that there is a smooth bounded function $u$ satisfying
\begin{equation}
\label{e5.1}
g_0-S\beta(g_0)+\nabla du\geq \theta g_0
\end{equation}
for some $\theta>0$.

In \cite{JY25}, Jiao-Yin obtained the following criteria of existence time.
\begin{theorem}
\label{t1.1}
Let $(M, \nabla, g_0)$ be a complete noncompact affine Riemannian manifold. Assume that

\textbf{\emph{(i)}}
\begin{minipage}[t]{0.9\linewidth}
There exists a smooth real function $\rho$ which is uniformly equivalent to the distance function from a fixed point such that $|d\rho|_{g_0}$, $|\nabla d\rho|_{g_0}$ are uniformly bounded.
\end{minipage}

\textbf{\emph{(ii)}}
\begin{minipage}[t]{0.9\linewidth}
For any point $p \in M^n$, there exists a local coordinates $\{x^1,\ldots ,x^n\}$ around $p$ such that $(g_0)_{ij}(p)=\delta_{ij}$, $\partial_k(g_0)_{ij}(p)$ and $\partial_i(\partial_j (g_0)_{kl}-\partial_l (g_0)_{kj})(p)$ are uniformly bounded, $\partial_l \partial_k(g_0)_{ij}(p)$ are uniformly bounded from above and all the changes
of these local coordinate systems are affine.
\end{minipage}

Then $S_A=S_B$.
\end{theorem}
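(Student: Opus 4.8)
The plan is to prove $S_A = S_B$ by establishing the two inequalities $S_A \leq S_B$ and $S_B \leq S_A$ separately. For the direction $S_A \leq S_B$, I would start from a solution $g(t)$ of \eqref{e1.2} on $M \times [0,S]$ that is uniformly equivalent to $g_0$, and show that for any $S' < S$ there is a bounded function $u$ solving \eqref{e5.1} with $S$ replaced by $S'$. The natural candidate is an integral of the potential: since along the flow $\partial_t g = -\beta(g)$ and $\beta_{ij}(g) = -\partial_i\partial_j \log\det g$, one has $g(S') - g_0 = -\int_0^{S'} \beta(g(s))\, ds = \nabla d\bigl(\int_0^{S'} \log\det g(s)\, ds\bigr)$ in local affine coordinates, so up to the ambiguity in $\log\det$ across charts one expects $u$ to be built from $\int_0^{S'} \log(\det g(s)/\det g_0)\, ds$, which is a globally defined function because the ratio of determinants transforms as a genuine function under affine changes of coordinates. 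Uniform equivalence of $g(s)$ to $g_0$ gives the bound on $u$, and the strict positivity $g(S') \geq \theta g_0$ — needed for \eqref{e5.1} — should follow because $g(t)$ stays uniformly equivalent to $g_0$ on the closed interval $[0,S']$ with $S' < S$, giving a uniform lower bound.

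For the reverse direction $S_B \leq S_A$, which I expect to be the harder and more technical half, I would fix $S < S_B$ with a bounded $u$ satisfying $g_0 - S\beta(g_0) + \nabla du \geq \theta g_0$, and construct a solution of \eqref{e1.2} on $M \times [0,S]$ uniformly equivalent to $g_0$. Following the Chern-Ricci flow strategy alluded to in the introduction (this is the real analogue of Lee-Tam and of the parabolic Monge-Ampère approach to K\"ahler-Ricci flow), the flow \eqref{e1.2} should be reduced to a scalar parabolic Monge-Ampère-type equation: writing $g(t) = g_0 - t\beta(g_0) + \nabla d\psi(t)$ for an unknown function $\psi$, the equation $\partial_t g = -\beta(g) = \nabla d \log\det g$ becomes $\partial_t \psi = \log\bigl(\det(g_0 - t\beta(g_0) + \nabla d\psi)/\det g_0\bigr) + c(t)$ for a suitable normalization, with $\psi(0) = 0$. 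The hypothesis \eqref{e5.1} with the function $u$ is precisely what guarantees that the reference form $g_0 - t\beta(g_0)$ (corrected by $\nabla du$) stays uniformly positive on $[0,S]$, so the Monge-Ampère operator is uniformly elliptic along the way.

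The main obstacle is the a priori estimates for this scalar equation on a noncompact manifold with possibly unbounded curvature at infinity. The technical hypotheses (i) and (ii) of the theorem are tailored to this: (i) provides an exhaustion function $\rho$ with controlled gradient and affine Hessian, which is used as a barrier and as a cutoff weight in a maximum-principle argument à la Shi / Cheng-Yau to get the zeroth-order bound on $\psi$ and hence $C^0$-equivalence of $g(t)$ and $g_0$; (ii) supplies the local control on the coefficients and their affine-invariant derivatives needed for the $C^1$ and $C^2$ (Calabi-type third-order, or Chern-Ricci flow Laplacian-of-trace) estimates, after which Krylov-Safonov / Schauder theory upgrades to higher regularity and the solution is continued up to time $S$. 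I would carry this out by first solving the equation on an exhausting sequence of relatively compact domains with Dirichlet data, proving estimates uniform in the domain using the barriers from (i) and the coefficient bounds from (ii), and then passing to a limit; the delicate point throughout is that all constants must be uniform in space (not just locally), which is exactly where the uniform bounds in (i) and (ii) are consumed. Finally, combining both inequalities yields $S_A = S_B$.
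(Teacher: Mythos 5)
Your outline is sound, but note first that this paper does not actually prove Theorem \ref{t1.1}: it is quoted verbatim from \cite{JY25}, and the present paper's contribution is the variant Theorem \ref{t1.2} with weakened hypotheses. Your two-step plan is essentially the argument of \cite{JY25}. The easy direction $S_A\leq S_B$ is exactly as you describe: $u=\varphi(\cdot,S')=\int_0^{S'}\log\bigl(\det g(s)/\det g_0\bigr)\,ds$ is globally defined and bounded by uniform equivalence, and $g_0-S'\beta(g_0)+\nabla du=g(S')\geq\theta g_0$. (The normalization constant $c(t)$ you insert in the scalar equation is unnecessary: dividing by $\det g_0$ as in \eqref{e1.3} already removes the ambiguity of $\log\det$ across affine charts.) For the hard direction $S_B\leq S_A$ you correctly identify that \eqref{e5.1} feeds the zeroth-order estimate, that hypothesis (i) supplies the exhaustion function needed to run maximum principles on a noncompact manifold, and that hypothesis (ii) is consumed in the second-order (trace/Calabi) estimate; this is precisely why the first-derivative conditions on $(g_0)_{ij}$ appear in Theorem \ref{t1.1} -- one cannot choose geodesic-normal affine coordinates, so the derivatives of the metric enter the commutators in the $C^2$ computation.

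Where the paper genuinely diverges is in how it proves the analogous Theorem \ref{t1.2}: instead of doing the real Monge--Amp\`ere estimates directly on $M$, it lifts the flow to the tangent bundle $(TM,J_\nabla,g_0^T)$, where \eqref{e1.2} becomes (after the time rescaling $t\mapsto t/4$) the Chern--Ricci flow, and then imports the estimates of Lee--Tam \cite{LT20} wholesale (Lemmas \ref{l3.1} and \ref{l3.2} replacing the corresponding lemmas of \cite{JY25}). The payoff of that route is exactly the removal of the pointwise first-derivative conditions in hypothesis (ii) of Theorem \ref{t1.1}, replaced by bounds on the torsion and bisectional curvature of $g_0^T$. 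Your direct approach would prove Theorem \ref{t1.1} as stated but would not yield the improvement that is the point of this paper; if you intend your argument as a proof of the quoted theorem, it is the right skeleton, though the trace estimate (the analogue of Lemma \ref{l3.2}) is the step where all of hypothesis (ii) must actually be spent, and your sketch leaves that computation entirely implicit.
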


In this work, we want to weaken the conditions in the above theorem, such as removing the restriction on the first derivative of the metric. We obtain the following:
\begin{theorem}\label{t1.2}
Let $(M, \nabla, g_0)$ be a complete noncompact affine Riemannian manifold. Assume that

\hspace*{0.1cm}\textbf{\emph{(i)}}
\begin{minipage}[t]{0.9\linewidth}
On $(TM,J_\nabla,g_0^T)$, there exists a smooth real function $\rho$ which is uniformly equivalent to the distance function from a fixed point such that $|\partial\rho|_{g_0^T}$, $|\partial\bar{\partial}\rho|_{g_0^T}$ are uniformly bounded.
\end{minipage}\\

\textbf{\emph{(ii)}}
\begin{minipage}[t]{0.9\linewidth}
$|T_0|_{g_0^T}$ and $|\bar{\partial}T_0|_{g_0^T}$ are uniformly bounded. The bisectional curvature of $g_0^T$ is uniformly bounded from below.
\end{minipage}\\

Then $S_A=S_B$.
\end{theorem}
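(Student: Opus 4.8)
The plan is to prove the two inequalities $S_A\le S_B$ and $S_B\le S_A$ separately, transplanting everything to the tangent bundle $(TM,J_\nabla,g_0^T)$, on which \eqref{e1.2} becomes the Chern--Ricci flow of the Hermitian metric $g_0^T$ (which is K\"ahler exactly when $g_0$ is Hessian). Hypotheses (i)--(ii) are precisely the data needed to run the noncompact Chern--Ricci flow estimates of Lee--Tam \cite{LT20} in this torsion setting: (i) provides an exhaustion function with bounded $\partial\rho$ and $\bpartial\rho$ for localization, while (ii) replaces the first-derivative control of $g_0$ used in Theorem \ref{t1.1} by the intrinsic bounds $|T_0|_{g_0^T}$, $|\bpartial T_0|_{g_0^T}$ together with a one-sided bisectional curvature bound.

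\emph{The direction $S_A\le S_B$.} Let $0<S'<S_A$ and let $g(t)$ solve \eqref{e1.2} on $M\times[0,S']$ with $C^{-1}g_0\le g(t)\le Cg_0$. In affine coordinates $\partial_t g_{ij}=-\beta_{ij}(g)=\partial_i\partial_j\log\det g(t)$, so integrating in $t$,
\[
g_{ij}(t)=(g_0)_{ij}-t\,\beta_{ij}(g_0)+\partial_i\partial_j u(t),\qquad u(t):=\int_0^t\log\frac{\det g(s)}{\det g_0}\,ds ,
\]
where $u(t)$ is a globally well defined function (the ratio of determinants is invariant under affine changes of coordinates) and $|u(S')|\le n S'\log C$. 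Since $g(S')=g_0-S'\beta(g_0)+\nabla d\,u(S')\ge C^{-1}g_0$, the number $S'$ satisfies the condition defining $S_B$; letting $S'\to S_A$ gives $S_A\le S_B$.

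\emph{The direction $S_B\le S_A$.} Fix $S<S_B$ and a bounded $u$ with $g_0-S\beta(g_0)+\nabla d\,u\ge\theta g_0$. Put
\[
\hat g_t:=\tfrac{t}{S}\bigl(g_0-S\beta(g_0)+\nabla d\,u\bigr)+\bigl(1-\tfrac tS\bigr)g_0=g_0-t\beta(g_0)+\nabla d\bigl(\tfrac tS u\bigr),
\]
so that $\hat g_t\ge\min(1,\theta)\,g_0>0$ for all $t\in[0,S]$; a solution of \eqref{e1.2} then has the form $g(t)=\hat g_t+\nabla d\,\psi(t)$ with $\psi(0)=0$ and $\psi$ solving the parabolic real Monge--Amp\`ere--type equation $\partial_t\psi=\log\frac{\det(\hat g_t+\nabla d\,\psi)}{\det g_0}-\tfrac1S u$. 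I would solve this on an exhaustion $\Omega_1\subset\Omega_2\subset\cdots$ of $M$ by domains with smooth boundary and compact closure, with zero initial and boundary data on $\Omega_k\times[0,S)$; short-time solvability there is standard parabolic theory, and the solution persists as long as uniform parabolicity (i.e.\ uniform equivalence of metrics) holds. The crux is a family of a priori estimates on $\Omega_k\times[0,S']$, $S'<S$, with constants independent of $k$: a $C^0$ bound $|\psi|\le C$ from the maximum principle (the zeroth-order term is controlled by $u$, and the localization uses $\rho$); a uniform equivalence $C^{-1}g_0\le g(t)\le Cg_0$ obtained by the Chern--Ricci-flow second-order estimate applied to a quantity of the form $\log\tr_{g_0}g-A\psi+f(\rho)$, in which the torsion terms of $g_0^T$ are absorbed by Young's inequality using $|T_0|_{g_0^T}$, $|\bpartial T_0|_{g_0^T}$ and the bisectional lower bound, while the cutoff contributes only bounded quantities because $\partial\rho$, $\bpartial\rho$ are bounded; and, once the metrics are uniformly equivalent, Calabi/Evans--Krylov third-order estimates and parabolic Schauder theory giving uniform $C^\infty_{\mathrm{loc}}$ bounds on $\Omega_k\times(0,S']$. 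Passing to a subsequential limit as $k\to\infty$ produces a solution of \eqref{e1.2} on $M\times[0,S')$ with $C^{-1}g_0\le g(t)\le Cg_0$; since $S'<S$ is arbitrary, $S\le S_A$, hence $S_B\le S_A$.

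\emph{Main obstacle.} The difficulty is the second-order estimate on the noncompact manifold under the weakened hypotheses: without a bound on $\partial g_0$, the evolution of $\log\tr_{g_0}g$ carries error terms built from the torsion of $g_0^T$ and its $\bpartial$-derivative, which must be grouped and dominated using only the intrinsic bounds in (ii), and this must be done simultaneously with carrying the cutoff $f(\rho)$ through the computation so that no constant degenerates as $\Omega_k\uparrow M$. Reconciling these two localizations --- in space via $\rho$, and in torsion/curvature via (ii) --- is the technical heart; the remaining steps are a fairly direct adaptation of the Chern--Ricci flow arguments to tangent-bundle (hence fiberwise translation-invariant) metrics.
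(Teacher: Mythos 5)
Your proposal is correct and takes essentially the same route as the paper: the easy inequality $S_A\le S_B$ via the bounded potential $\varphi(S')$, and the inequality $S_B\le S_A$ via an exhaustion argument whose key a priori bounds (the $C^0$ estimate on $\varphi,\dot\varphi$ and the estimate on ${\rm tr}_{g_0}g$) are obtained by lifting \eqref{e1.2} to the Chern--Ricci flow on $(TM,J_\nabla,g_0^T)$ and invoking the Lee--Tam estimates under hypotheses (i)--(ii). This is precisely how the paper proceeds, substituting its Lemmas \ref{l3.1} and \ref{l3.2} into the proof scheme of Theorem \ref{t1.1} from \cite{JY25} to dispense with the first-derivative condition on $g_0$.
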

When $g_0$ is a Hessian metric, by Theorem \ref{t1.2}, we have
\begin{theorem}\label{thm2}
Let $(M,g_0)$ be a complete noncompact Hessian manifold with bounded Hessian curvature. Then $S_A=S_B$.
\end{theorem}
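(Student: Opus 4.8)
The plan is to obtain Theorem~\ref{thm2} as a special case of Theorem~\ref{t1.2}: when $g_0$ is Hessian with bounded Hessian curvature, I would verify that hypotheses (i) and (ii) hold for the associated Kähler metric $g_0^T$ on $(TM,J_\nabla)$, and then simply quote Theorem~\ref{t1.2}.

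First I would use the correspondence recalled in Section~2. Writing $g_0=\nabla d\varphi$ locally, the induced Hermitian metric $g_0^T$ on $(TM,J_\nabla)$ is Kähler, so its Chern connection is torsion free: $T_0\equiv 0$, whence $|T_0|_{g_0^T}=|\bar\partial T_0|_{g_0^T}=0$ and the first part of (ii) is automatic. For the curvature I would invoke the standard identification (see \cite{r11}, \cite{JY25}) of the Hessian curvature tensor of $(M,g_0)$ with the Kähler curvature tensor of $(TM,g_0^T)$; under this identification a uniform bound on the Hessian curvature of $g_0$ yields a uniform bound on the curvature tensor of $g_0^T$ (and if one reads ``Hessian curvature'' as Hessian sectional curvature, Berger's lemma on the Kähler manifold $(TM,g_0^T)$ still upgrades this to a bound on the full curvature tensor). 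In particular the bisectional curvature of $g_0^T$ is uniformly bounded, which is the remaining part of (ii).

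Next I would produce the function $\rho$ of (i). In local affine coordinates $x^i$ on $M$, fibre coordinates $\xi^i$, and $z^j=x^j+\sqrt{-1}\,\xi^j$, the Riemannian metric underlying $g_0^T$ is $\sum_{i,j}(g_0)_{ij}(x)\,(dx^i dx^j+d\xi^i d\xi^j)$; since the fibre part is, over each point of $M$, a fixed flat metric, completeness of $(M,g_0)$ forces completeness of $(TM,g_0^T)$ (a curve in $TM$ escaping every compact set either projects to a curve of infinite $g_0$-length or eventually lies over a compact subset of $M$, on which $g_0^T$ is uniformly Euclidean along the fibres), as also observed in \cite{JY25}. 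Thus $(TM,g_0^T)$ is a complete Kähler manifold with bounded curvature, and the standard regularization of the distance function on such a manifold (as in \cite{LT20}) yields a smooth exhaustion $\rho$ on $TM$ with $C^{-1}(1+d_{g_0^T}(\cdot,o))\le \rho\le C(1+d_{g_0^T}(\cdot,o))$ and $|\nabla\rho|_{g_0^T}+|\Hess_{g_0^T}\rho|_{g_0^T}\le C$. Since on a Kähler manifold $|\partial\rho|_{g_0^T}\le|\nabla\rho|_{g_0^T}$ and $|\partial\bar\partial\rho|_{g_0^T}$ is controlled by $|\Hess_{g_0^T}\rho|_{g_0^T}$, this $\rho$ satisfies (i). Theorem~\ref{t1.2} then gives $S_A=S_B$.

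The hard part will be the construction of $\rho$: one must check that the (two-sided) curvature bound on $g_0^T$ coming from bounded Hessian curvature is genuinely enough to smooth the distance function with controlled gradient and Hessian — both the Hessian comparison estimate and the subsequent partition-of-unity smoothing consume the sectional curvature bound — and that the completeness of $(TM,g_0^T)$ is secured. By contrast, the verification of the torsion bound and of the curvature identification are essentially bookkeeping with the tangent-bundle construction of Section~2.
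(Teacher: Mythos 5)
Your proposal is correct and follows essentially the same route as the paper: both deduce Theorem~\ref{thm2} from Theorem~\ref{t1.2} by passing to the K\"ahler metric $g_0^T$ on $TM$, using the vanishing of the torsion and the identification $R^T_{i\bar{j}k\bar{l}}=-\tfrac{1}{2}Q_{ijkl}\circ\pi$ (Propositions~\ref{p2.4} and~\ref{p2.7}) to verify hypothesis (ii), and bounded curvature plus completeness to produce the exhaustion function required in (i). Your write-up is in fact more detailed than the paper's one-line proof, notably in justifying the completeness of $(TM,g_0^T)$ and the construction of $\rho$ via the standard smoothing of the distance function from \cite{LT20}.
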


The classical uniformization theorem of surfaces states that there exists a metric with constant Gaussian curvature on every surface. Equivalently, in the language of complex analysis, every simply connected Riemann surface is conformally equivalent to one of three Riemann surfaces: the open unit disk, the complex plane, or the Riemann sphere. In particular, a complete noncompact Riemannian surface with positive Gaussian curvature is conformally equivalent to the complex plane. In
 higher dimensions, a longstanding conjecture of Yau \cite{Yau89} states that
a complete noncompact K\"{a}hler manifold with positive holomorphic
bisectional curvature is biholomorphic to $\mathbb{C}^n$. This conjecture remains unresolved to this day, and even proving that manifolds satisfying the conditions of the conjecture are simply connected is still an open problem.

Chau-Tam \cite{Tam06} studied the asymptotic behavior of the K\"{a}hler-Ricci flow. They proved that a complete noncompact K\"{a}hler manifold with nonnegative bounded holomorphic bisectional curvature and maximal volume growth is biholomorphic to complex Euclidean space $\mathbb{C}^n$. It was proved by Liu \cite{Liu19} that a complete noncompact
K\"{a}hler manifold with nonnegative bisectional curvature and maximal volume growth is biholomorphic to
$\mathbb{C}^n$. The same result was proved by Lee-Tam \cite{LT20} later using the Chern-Ricci flow. They constructed a short time solution to the K\"{a}hler-Ricci flow on some K\"{a}hler manifolds, which does not need to have bounded holomorphic bisectional curvature, and proved that a complete noncompact K\"{a}hler manifold with nonnegative holomorphic bisectional curvature and maximal volume growth also has a metric with nonnegative bounded holomorphic bisectional curvature and maximal volume growth.

Similar to \cite{LT20}, we obtain the following short time solution to \eqref{e1.2}:
\begin{theorem}\label{t4.1}
Suppose $(M^n,g_0)$ is a complete noncompact Hessian manifold with dimension $n$ with nonnegative Hessian sectional curvature such that $V_0(x,1)\geq v_0>0$ for some $v_0>0$ for all $x\in TM$. There exist $S=S(n,v_0)>0$, $a(n,v_0)>0$ depending only on $n$, $v_0$ such that the geometric flow \ref{e1.2} has a complete solution $g(t)$ on $M\times [0,S]$ which satisfies the following:

\hspace*{0.2cm}\emph{(i)}
\[|Q_{ijkl}|_{g(t)}\leq \frac{a}{t}\]
\hspace*{1.1cm}on $M\times (0,S]$.

\hspace*{0.1cm}\emph{(ii)} $g(t)$ has nonnegative Hessian sectional curvature.

\emph{(iii)} If $g^T(0)$ has maximal volume growth, then $g^T(t)$ also has maximal volume\\ \hspace*{1.1cm}growth.
\end{theorem}
Finally, we prove the main theorem of this paper, which can be regarded as a
real analogue of the uniformization theorem on certain complex manifolds (see \cite{LT20}).
\begin{theorem}\label{t1.5}
Let $(M^n,g_0)$ be a complete noncompact Hessian manifold with nonnegative Hessian sectional curvature, and its tangent bundle $(TM,g_0^T)$ has maximal volume growth. Then $M$ is diffeomorphic to $\mathbb{R}^n$.
\end{theorem}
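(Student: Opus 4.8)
The strategy is to run the geometric flow supplied by Theorem \ref{t4.1} in order to replace the (possibly unbounded) initial metric $g_0$ by a Hessian metric with \emph{bounded} Hessian sectional curvature, and then to invoke the bounded-curvature case of the statement, which is the case established in \cite{JY25}. The first thing I would check is that the hypotheses of Theorem \ref{t4.1} actually hold: since $g_0$ has nonnegative Hessian sectional curvature, the K\"{a}hler metric $g_0^T$ on $(TM,J_\nabla)$ has nonnegative holomorphic bisectional curvature, hence nonnegative Ricci curvature, and combining this with the assumption that $(TM,g_0^T)$ has maximal volume growth, the Bishop--Gromov volume comparison theorem yields a uniform lower bound $V_0(x,1)\ge v_0>0$ for all $x\in TM$, where $v_0$ depends only on $n$ and the asymptotic volume ratio of $g_0^T$. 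So Theorem \ref{t4.1} applies.

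Running the flow, I obtain $S=S(n,v_0)>0$, $a=a(n,v_0)>0$ and a complete solution $g(t)$, $t\in[0,S]$, of \eqref{e1.2} with $g(0)=g_0$. Fix any $t_0\in(0,S]$ and set $g_1:=g(t_0)$. Because the flow \eqref{e1.2} preserves the Hessian property relative to the fixed flat, torsion-free connection $\nabla$, the triple $(M,\nabla,g_1)$ is again a complete noncompact Hessian manifold; by Theorem \ref{t4.1}, $g_1$ has nonnegative Hessian sectional curvature, $|Q_{ijkl}|_{g_1}\le a/t_0<\infty$, and $(TM,g_1^T)$ has maximal volume growth. Since the diffeomorphism type of $M$ does not depend on which metric we put on it, it is now enough to prove the theorem under the extra assumption that the Hessian sectional curvature is bounded, which I would apply to $g_1$.

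In the bounded case I would pass once more to the K\"{a}hler side: $(TM,J_\nabla,g_1^T)$ is a complete K\"{a}hler manifold with bounded nonnegative holomorphic bisectional curvature and maximal volume growth, so by Chau--Tam \cite{Tam06} (alternatively Liu \cite{Liu19}) it is biholomorphic to $\mathbb{C}^n$; in particular its underlying smooth manifold $TM$ is diffeomorphic to $\mathbb{R}^{2n}$. Since the zero section is a deformation retract of the total space, $M$ is contractible, hence simply connected, so the developing map of the flat affine structure is a globally defined immersion $M\to\mathbb{R}^n$; completeness of the Hessian metric $g_1$ then forces this immersion to be a diffeomorphism onto a convex domain of $\mathbb{R}^n$, which is in turn diffeomorphic to $\mathbb{R}^n$. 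This final part reproduces the argument of \cite{JY25}.

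I expect essentially all of the real difficulty to sit inside Theorem \ref{t4.1} itself: constructing a short-time solution with the $a/t$ curvature estimate while simultaneously preserving nonnegativity of the Hessian sectional curvature and maximal volume growth, which goes through Theorem \ref{thm2} and Chern--Ricci-flow type a priori estimates. Within the reduction above, the one point that requires genuine care is the last topological step---deducing $M\cong\mathbb{R}^n$ from $TM\cong\mathbb{R}^{2n}$---since contractibility of an open $n$-manifold is by itself not enough, and one must really use the interaction of the flat connection $\nabla$ with the completeness of $g_1$.
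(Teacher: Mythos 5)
Your reduction step is exactly the paper's: apply Theorem \ref{t4.1} to replace $g_0$ by a complete Hessian metric with nonnegative \emph{bounded} Hessian sectional curvature whose tangent-bundle metric still has maximal volume growth, and your verification of the hypothesis $V_0(x,1)\ge v_0$ (nonnegative bisectional curvature of $g_0^T$ $\Rightarrow$ nonnegative Ricci, then Bishop--Gromov plus basepoint-independence of the asymptotic volume ratio) is a correct filling-in of a step the paper leaves implicit. The divergence, and the problem, is in how you finish the bounded-curvature case. The paper does not descend from a structure theorem on $TM$; it re-runs the flow-theoretic argument of \cite{JY25} \S 6, with Theorem \ref{thm2} (the existence criterion $S_A=S_B$ for bounded Hessian curvature) substituted for Corollary 5.4 of \cite{JY25} to get the needed existence time for the flow on $M$ itself.

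The genuine gap is your last step. From Chau--Tam you get that $TM$ is biholomorphic to $\bfC^n$, hence that $M$ is contractible and simply connected, and you correctly note that this alone does not give $M\cong\bfR^n$ (Whitehead-type examples). But the repair you propose --- that the developing map of the flat structure $\nabla$ is a globally defined immersion $M\to\bfR^n$ and that ``completeness of the Hessian metric $g_1$ then forces this immersion to be a diffeomorphism onto a convex domain'' --- is precisely the assertion that needs proof, and it is not a standard fact. Completeness of an auxiliary Riemannian metric compatible with a flat affine structure does not in general control injectivity of the developing map (this is the circle of difficulties around the Markus conjecture); the convexity statement for Hessian manifolds is known in the compact case (Shima), but you give no argument in the complete noncompact setting, and nowhere do you use the curvature sign or the volume growth at this stage, whereas the flow-based argument of \cite{JY25} \S 6 that the paper invokes does. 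As written, your proof establishes contractibility of $M$ but not the diffeomorphism type, so the conclusion $M\cong\bfR^n$ is not reached.
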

The above theorem weakens the conditions in Theorem 1.3 of \cite{JY25}.


The rest of this paper is organized as follows: in Section 2, we provide some preliminaries which may be used later.
In Section 3, we establish the \emph{a priori} estimates up to $C^2$
for the flow \eqref{e1.2} and obtain the existence criteria (Theorem \ref{t1.2} and Theorem \ref{thm2}) for \eqref{e1.2}.
In Section 4, we obtain the short time solution to \eqref{e1.2} (Theorem \ref{t4.1}) and prove the topological rigidity theorem for noncompact Hessian manifolds (Theorem \ref{t1.5}).

\textbf{Acknowledgement.} The first author wishes to thank Professor Jiao Heming for suggesting him to read \cite{r13,r3}
and to study some elliptic/parabolic equations on Hessian manifolds.

\section{Preliminaries}

In this section, we provide some basic results and preliminaries which may be used in the following sections. Most of these can be found in \cite{JY25}, for the convenience of the reader, we rewrite them here.

First, as discussed in \cite{JY25}, \eqref{e1.2} can be rewritten as a scalar equation, which is a kind of parabolic Monge-Amp\`{e}re equations:
\begin{equation}
\label{e1.3}
\left\{ \begin{aligned}
   &\frac{\partial \varphi }{\partial t}=\log \frac{\det ({{g}_{0}}-t\beta ({{g}_{0}})+\nabla d\varphi )}{\det ({{g}_{0}})},  \\
   &{{g}_{0}}-t\beta ({{g}_{0}})+\nabla d\varphi >0,  \\
   &\varphi (0)=0.  \\
\end{aligned} \right.
\end{equation}
where $\varphi$ is an unknown function. The relationship between $\varphi$ and $g$ is described as follows:
\[g(t):={g}_{0}-t\beta ({{g}_{0}})+\nabla d\varphi,\]
\[\varphi (x,t)=\int_{0}^{t}{\log \frac{\det g(x,s)}{\det {{g}_{0}}(x)}}ds.\]

Now we introduce fundamental definitions and key results on affine and Hessian manifolds. Most of them can be found in \cite{r11}.


\begin{definition} An affine manifold $(M,\nabla)$ is a differentiable manifold endowed with a flat connection $\nabla$.
\end{definition}
The subsequent statement is an immediate consequence of the affine manifold definition.
\begin{proposition}
\label{prop1}
\

\emph{(1)}
\begin{minipage}[t]{0.92\linewidth}
Suppose that $M$ admits a flat connection $\nabla$. Then there exist local coordinate systems on $M$ such that $\nabla_{\partial /\partial x^i} \partial /\partial x^j=0$. The changes between such coordinate systems are affine transformations.
\end{minipage}

\emph{(2)}
\begin{minipage}[t]{0.92\linewidth}
Conversely, if $M$ admits local coordinate systems such that the changes of the local coordinate systems are affine transformations, then there exists a flat connection $\nabla$ satisfying $\nabla_{\partial /\partial x^i} \partial /\partial x^j=0$ for all such local coordinate systems.
\end{minipage}
\end{proposition}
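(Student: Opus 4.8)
The plan is to exploit the standard equivalence between flat connections and affine atlases, treating the two implications of the proposition separately. Throughout I recall that in the convention of \cite{r11} a connection is \emph{flat} precisely when both its curvature $R$ and its torsion $T$ vanish, so for part (1) both hypotheses are at my disposal.

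For (1) I would first use the vanishing of the curvature. Fixing a point $p$ and a basis of $T_pM$, flatness makes parallel transport locally independent of path, so each basis vector extends to a parallel vector field on a neighborhood; this produces a local frame $X_1,\dots,X_n$ with $\nabla_{X_i}X_j=0$. Next I invoke torsion-freeness: from $T(X_i,X_j)=\nabla_{X_i}X_j-\nabla_{X_j}X_i-[X_i,X_j]$ together with $\nabla_{X_i}X_j=0$ I obtain $[X_i,X_j]=0$, so the frame commutes. A commuting, pointwise linearly independent frame can be simultaneously rectified (the commuting-flows form of the Frobenius theorem), yielding local coordinates $\{x^i\}$ with $\partial/\partial x^i=X_i$. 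In these coordinates $\nabla_{\partial_i}\partial_j=\nabla_{X_i}X_j=0$, i.e. all Christoffel symbols vanish.

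To see that two such coordinate systems $\{x^i\}$ and $\{y^a\}$ are related by an affine transformation, I would write out the transformation law for the Christoffel symbols,
\[
\tilde\Gamma^{c}_{ab}=\frac{\partial y^c}{\partial x^k}\frac{\partial x^i}{\partial y^a}\frac{\partial x^j}{\partial y^b}\Gamma^{k}_{ij}+\frac{\partial y^c}{\partial x^k}\frac{\partial^2 x^k}{\partial y^a\partial y^b}.
\]
Since $\Gamma^{k}_{ij}=\tilde\Gamma^{c}_{ab}=0$ and the Jacobian $\partial y^c/\partial x^k$ is invertible, the inhomogeneous term forces $\partial^2 x^k/\partial y^a\partial y^b=0$, so each $x^k$ is an affine function of the $y^a$; hence the change of coordinates is affine. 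For the converse (2), I would reverse this computation: on each chart I declare $\nabla$ by setting all Christoffel symbols to zero, that is $\nabla_{\partial_i}\partial_j=0$. The only point to verify is that these locally defined connections agree on overlaps, but if the transition $x=x(y)$ is affine then $\partial^2 x^k/\partial y^a\partial y^b=0$, and the same transformation law shows the two charts assign compatible (both vanishing) Christoffel symbols, so the local connections glue to a global $\nabla$. This $\nabla$ satisfies $\nabla_{\partial_i}\partial_j=0$ in every affine chart, and its curvature and torsion, being built from the vanishing Christoffel symbols and their derivatives, are identically zero, so $\nabla$ is flat and torsion-free as required.

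The only genuinely analytic step is the construction of the local parallel frame from $R=0$ in part (1): this is where the integrability hidden in flatness is used, via path-independence of parallel transport (equivalently, Frobenius applied to the horizontal distribution of $\nabla$). Everything afterwards — commuting the frame, rectifying it to coordinates, and the index bookkeeping with the transformation law for $\Gamma$ — is standard and computational, so I expect no further obstacle.
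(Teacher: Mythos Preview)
Your argument is correct and complete: the parallel-frame construction from $R=0$, the use of torsion-freeness to make the frame commute, the rectification to coordinates, and the Christoffel transformation law for both directions are exactly the standard proof. The paper itself supplies no proof at all --- it simply records the proposition as ``an immediate consequence of the affine manifold definition'' (deferring implicitly to \cite{r11}) --- so you have written out in full what the paper takes for granted; there is nothing further to compare.
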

Let $(M, \nabla, g)$ be an affine manifold with a Riemannian metric $g$, where $\hat{\nabla}$ denotes the Levi-Civita connection of
$(M,g)$. Defining $\gamma=\hat{\nabla}-\nabla$. Since both connections $\nabla$ and $\hat{\nabla}$ are torsion-free, we obtain
\[\gamma_X Y=\gamma_Y X\]
Furthermore, in affine coordinate systems, the components $\gamma_{\;jk}^i$ of $\gamma$
coincide with the Christoffel symbols $\Gamma_{\;jk}^i$ associated with the Levi-Civita connection $\hat{\nabla}$.

For the pair $(g,\nabla)$, the Hessian curvature tensor is given by $Q=\nabla \gamma$.
A Riemannian metric $g$ on a affine manifold $(M,\nabla)$ is called Hessian if and only if it admits local expression through $g=\nabla d\varphi$. An affine manifold endowed with a Hessian metric is called a Hessian manifold.

Given a flat connection $\nabla$, a local coordinate system $\{x^1,\ldots,x^n\}$ satisfying $\nabla_{\frac{\partial}{\partial x^i}} \frac{\partial}{\partial x^j}=0$ is called an affine coordinate system with respect to $\nabla$.
Let $(M,\nabla,g)$ be a Hessian manifold and its metric $g$ admits local representation
\[g_{ij}=\frac{\partial^2 \phi}{\partial x^i \partial x^j}\]
where $\{x^1,\ldots,x^n\}$ is an affine coordinate system with respect to $\nabla$. The next two results are established in \cite{r11}.

\begin{proposition}\label{p2.3}
Let $(M,\nabla)$ be an affine manifold and $g$ a Riemannian metric on $M$. Then the following are equivalent:

\emph{(1)} $g$ is a Hessian metric

\emph{(2)} $(\nabla_X g)(Y,Z)=(\nabla_Y g)(X,Z)$

\emph{(3)} $\dfrac{\partial g_{ij}}{\partial x^k}=\dfrac{\partial g_{kj}}{\partial x^i}$

\emph{(4)} $g(\gamma_X Y,Z)=g(Y,\gamma_X Z)$

\emph{(5)} $\gamma_{ijk}=\gamma_{jik}$

\end{proposition}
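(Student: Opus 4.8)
The plan is to prove the five statements equivalent by isolating the one assertion with genuine content — recovering a local potential from the symmetry of the first derivatives of $g$ — and treating everything else as a translation between coordinate-free and coordinate expressions. Concretely, I would establish $(1)\Leftrightarrow(3)$, $(2)\Leftrightarrow(3)$, and $(3)\Leftrightarrow(4)\Leftrightarrow(5)$.

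First I would dispose of the routine equivalences, all of which are immediate once one fixes an affine coordinate system $\{x^1,\dots,x^n\}$ with $\nabla_{\partial_i}\partial_j=0$. In such coordinates $(\nabla_{\partial_k}g)(\partial_i,\partial_j)=\partial_k g_{ij}$, so evaluating the tensor identity $(2)$ on coordinate fields (and reversing the step by tensoriality) yields precisely $(3)$. For $(4)$ and $(5)$, recall $\gamma=\hat\nabla-\nabla$ is a symmetric $(1,2)$-tensor with $\gamma^k_{ij}=\Gamma^k_{ij}$ in affine coordinates, so its lowered components are, in the notation of \cite{r11}, $\gamma_{ijk}=\tfrac12(\partial_j g_{ik}+\partial_k g_{ij}-\partial_i g_{jk})$; this is automatically symmetric in one pair of indices because $\gamma$ is the difference of two torsion-free connections. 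Condition $(5)$ (symmetry of $\gamma_{ijk}$ in the other pair) and condition $(4)$ (the coordinate-free statement that each $\gamma_X$ is $g$-symmetric, which in components is one further permutation symmetry of $\gamma_{ijk}$) each therefore force $\gamma_{ijk}$ to be totally symmetric, and from the displayed formula total symmetry of $\gamma_{ijk}$ is equivalent to $\partial_i g_{jk}=\partial_j g_{ik}$, i.e.\ $(3)$. I would also remark that $(3)$, though written in coordinates, is chart-independent, since affine coordinate changes are affine maps and hence preserve second partials of the $g_{ij}$ — equivalently, $(3)$ is just the intrinsic identity $(2)$.

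The substantive step is $(1)\Leftrightarrow(3)$. The forward direction is trivial: if $g=\nabla d\varphi$ locally then $g_{ij}=\partial_i\partial_j\varphi$ in an affine chart, whence $\partial_k g_{ij}=\partial_i\partial_j\partial_k\varphi$ is symmetric in all three indices. For $(3)\Rightarrow(1)$ I would work in an affine coordinate ball $U$ about an arbitrary point, chosen convex so that $H^1_{\mathrm{dR}}(U)=0$. For each fixed $j$, condition $(3)$ says the $1$-form $\sum_i g_{ij}\,dx^i$ is closed on $U$, hence equals $df_j$ for some $f_j\in C^\infty(U)$, so $g_{ij}=\partial_i f_j$. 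Symmetry $g_{ij}=g_{ji}$ then gives $\partial_i f_j=\partial_j f_i$, so $\sum_j f_j\,dx^j$ is closed and equals $d\varphi$ for some $\varphi\in C^\infty(U)$; thus $g_{ij}=\partial_i\partial_j\varphi=(\nabla d\varphi)_{ij}$ on $U$, which is $(1)$.

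I expect the only real obstacle to be this last argument: one must check that $(3)$ is exactly the condition making the relevant $1$-forms closed, that the chart can be shrunk to a contractible one so the Poincar\'e lemma applies, and that the two successive integrations fit together — the second being legitimate only because $g$ is symmetric. The remaining equivalences are bookkeeping once the conventions for $\gamma$ and for the action of $\nabla$ on tensors in affine coordinates are pinned down.
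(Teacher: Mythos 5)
Your proof is correct. The paper does not prove Proposition \ref{p2.3} at all --- it only cites Shima \cite{r11} --- and your argument (translating (2), (4), (5) in affine coordinates into the single symmetry $\partial_k g_{ij}=\partial_i g_{kj}$ via the formula $\gamma_{ijk}=\tfrac12(\partial_j g_{ik}+\partial_k g_{ij}-\partial_i g_{jk})$, then obtaining the local potential for $(3)\Rightarrow(1)$ by two successive applications of the Poincar\'e lemma) is exactly the standard proof given in that reference, so there is nothing to add.
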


\begin{proposition}\label{p2.4}
 Let $\hat{R}$ be the Riemannian curvature of a Hessian metric $g=\nabla d\phi$ and $Q=\nabla \gamma$ be the Hessian curvature tensor for $(g,\nabla)$. Then

\emph{(1)} $Q_{ijkl}=\dfrac{1}{2}\dfrac{\partial^4 \phi}{\partial x^i \partial x^j \partial x^k \partial x^l}-\dfrac{1}{2}g^{pq}\dfrac{\partial^3 \phi}{\partial x^i \partial x^k \partial x^p}\dfrac{\partial^3 \phi}{\partial x^j \partial x^l \partial x^q}$

\emph{(2)} $\hat{R}(X,Y)=-[\gamma_X,\gamma_Y],\;\;\;\hat{R}^i_{\;jkl}=\gamma^i_{\;lm}\gamma^m_{\;jk}-\gamma^i_{\;km}\gamma^m_{\;jl}.$

\emph{(3)}
$\hat{R}_{ijkl}=\dfrac{1}{2}(Q_{ijkl}-Q_{jikl})$

\end{proposition}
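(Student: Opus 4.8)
The plan is to derive all three identities by a single direct computation in an affine coordinate system $\{x^1,\dots,x^n\}$ adapted to $\nabla$, in which the components of $\nabla$ vanish and hence $\gamma^i_{jk}=\hat\Gamma^i_{jk}$, the Christoffel symbols of the Levi-Civita connection $\hat\nabla$. The starting observation is that, since $g=\nabla d\phi$ is Hessian, Proposition \ref{p2.3} gives that $\partial_k g_{ij}=\partial_i\partial_j\partial_k\phi$ is totally symmetric. Writing $\phi_{ijk}:=\partial_i\partial_j\partial_k\phi$ and $\phi_{ijkl}:=\partial_i\partial_j\partial_k\partial_l\phi$, the three terms of the Christoffel formula then collapse to
\[
\gamma^i_{jk}=\tfrac12 g^{il}\phi_{jkl},\qquad\text{equivalently}\qquad \gamma_{ijk}:=g_{im}\gamma^m_{jk}=\tfrac12\phi_{ijk},
\]
which is totally symmetric (recovering Proposition \ref{p2.3}(5)). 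For part (1) I would compute $Q=\nabla\gamma$ in these coordinates, where $\nabla$-differentiation reduces to ordinary partial differentiation, so that $Q^i_{jkl}=\partial_k\gamma^i_{jl}$. Using $\partial_k g^{im}=-g^{ia}g^{mb}\phi_{kab}$, the derivative falling on $\phi_{jlm}$ in $\gamma^i_{jl}=\tfrac12 g^{im}\phi_{jlm}$ yields the fourth-order term while the derivative falling on $g^{im}$ yields the quadratic term; after lowering the index with $g$ these combine into $Q_{ijkl}=\tfrac12\phi_{ijkl}-\tfrac12 g^{pq}\phi_{ikp}\phi_{jlq}$, the asserted formula.

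For part (2) I would expand the curvature of $\hat\nabla=\nabla+\gamma$ directly on coordinate fields. Since $\hat\nabla_l\partial_j=\gamma^m_{lj}\partial_m$, a short computation gives
\[
\hat R(\partial_k,\partial_l)\partial_j=\bigl(\partial_k\gamma^i_{lj}-\partial_l\gamma^i_{kj}\bigr)\partial_i+\bigl(\gamma^i_{km}\gamma^m_{lj}-\gamma^i_{lm}\gamma^m_{kj}\bigr)\partial_i,
\]
so that $\hat R^i_{jkl}=(\partial_k\gamma^i_{lj}-\partial_l\gamma^i_{kj})+[\gamma_k,\gamma_l]^i_j$. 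The crux is the Hessian identity
\[
\partial_k\gamma^i_{lj}-\partial_l\gamma^i_{kj}=-2\,[\gamma_k,\gamma_l]^i_j,
\]
which I would obtain by differentiating $\gamma^i_{lj}=\tfrac12 g^{im}\phi_{ljm}$, applying $\partial g^{-1}=-g^{-1}(\partial g)g^{-1}$, and then using the total symmetry of $\phi_{ijk}$ together with $g^{pq}=g^{qp}$ to rewrite the resulting products of third derivatives as the commutator. Substituting collapses the first-order terms and yields $\hat R^i_{jkl}=-[\gamma_k,\gamma_l]^i_j$; by the symmetry $\gamma^m_{kj}=\gamma^m_{jk}$ this is precisely $\gamma^i_{lm}\gamma^m_{jk}-\gamma^i_{km}\gamma^m_{jl}$, and in invariant form $\hat R(X,Y)=-[\gamma_X,\gamma_Y]$. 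This index matching — showing the antisymmetrized derivative of $\gamma$ equals twice the commutator — is where I expect the only real friction; the abstract identity $\hat R(X,Y)Z=(\nabla_X\gamma)(Y,Z)-(\nabla_Y\gamma)(X,Z)+[\gamma_X,\gamma_Y]Z$, valid because $\nabla$ is flat and torsion free, provides a coordinate-free packaging of the same step.

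Part (3) is then purely algebraic. Lowering the first index in part (2) gives $\hat R_{ijkl}=\gamma_{ilp}\gamma^p_{jk}-\gamma_{ikp}\gamma^p_{jl}=\tfrac14 g^{pq}\bigl(\phi_{ilp}\phi_{jkq}-\phi_{ikp}\phi_{jlq}\bigr)$. On the other hand, antisymmetrizing the formula of part (1) in $i\leftrightarrow j$ annihilates the fully symmetric fourth-order term $\phi_{ijkl}$, leaving, after a relabeling of the dummy indices $p,q$,
\[
Q_{ijkl}-Q_{jikl}=\tfrac12 g^{pq}\bigl(\phi_{ilp}\phi_{jkq}-\phi_{ikp}\phi_{jlq}\bigr).
\]
Comparing the two expressions gives $\hat R_{ijkl}=\tfrac12(Q_{ijkl}-Q_{jikl})$, which completes the proof. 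The whole argument is a chain of elementary coordinate computations; its only subtle point is the symmetrization in part (2), which hinges entirely on the Hessian hypothesis through the total symmetry of $\phi_{ijk}$.
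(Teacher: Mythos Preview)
Your computation is correct in all three parts; the index manipulations in (2), where you show $\partial_k\gamma^i_{lj}-\partial_l\gamma^i_{kj}=-2[\gamma_k,\gamma_l]^i_j$ via $\partial_k g^{im}=-g^{ip}\phi_{kpq}g^{qm}$ and the total symmetry of $\phi_{ijk}$, go through as claimed, and (1) and (3) follow exactly as you outline. The paper itself does not give a proof of this proposition: it simply records it as a known result, citing Shima's monograph \cite{r11}. Your argument is the standard direct verification in affine coordinates and is essentially what one finds there, so there is nothing to contrast.
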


\begin{definition}
\label{Kos}
Let $(M,\nabla,g)$ be a Hessian manifold and
$v$ the volume element of $g$. The first Koszul form $\alpha$ and the second Koszul form $\kappa$ for $(\nabla,g)$ are defined by
\[\nabla_X v=\alpha(X)v\]
\[\kappa=\nabla \alpha\]
\end{definition}
It follows that
\[\alpha(X)={\rm Tr} {\gamma_X}\]
and
\[\alpha_i=\frac{1}{2}\frac{\partial \log \det[g_{pq}]}{\partial x^i}=\gamma_{\;ki}^k\]
\[\kappa_{ij}=\frac{\partial \alpha_i}{\partial x^j}=\frac{1}{2}\frac{\partial \log \det[g_{pq}]}{\partial x^i \partial x^j}\]
locally.


Now we introduce the relationship between affine structures and complex structures, this is the core technology of the paper.
Let $(M,\nabla)$ be a flat manifold and $TM$ be the tangent bundle of $M$ with projection $\pi:TM\rightarrow M$. For an affine coordinate system $\{x^1,\ldots,x^n\}$ on $M$, we define
\begin{equation}
z^j=\xi^j+\sqrt{-1}\xi^{n+j}
\end{equation}
where $\xi^i=x^i\circ \pi$ and $\xi^{n+j}=dx^i$. Then $n$-tuples of functions given by $\{z^1,\ldots,z^n\}$ yield holomorphic coordinate systems on $TM$. We denote by $J_\nabla$ the complex structure tensor of the complex manifold $TM$. For a Riemannian metric $g$ on $M$ we put
\begin{equation}\label{e2.2}
g^T=\sum_{i,j=1}^{n}(g_{ij}\circ \pi)dz^id\overline{z}^j.
\end{equation}
Then $g^T$ is a Hermitian metric on the complex manifold $(TM,J_\nabla)$.
\begin{proposition} (\cite{r11}) \label{p2.6}
Let $(M,\nabla)$ be a flat manifold and $g$ a Riemannian metric on $M$. Then the following conditions are equivalent.

\noindent\emph{(1)} $g$ is a Hessian metric on $(M,\nabla)$.

\noindent\emph{(2)} $g^T$ is a K{\"a}hler metric on $(TM,J_\nabla)$.

\end{proposition}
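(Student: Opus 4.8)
The plan is to verify the equivalence through the fundamental form of $g^T$ together with the characterization of Hessian metrics recorded in Proposition~\ref{p2.3}. Since the excerpt already establishes that $\{z^1,\dots,z^n\}$ are holomorphic coordinates on $(TM,J_\nabla)$ and that $g^T$ is Hermitian, it remains only to analyze the closedness of its associated real $(1,1)$-form
\[
\omega=\sqrt{-1}\sum_{i,j}(g_{ij}\circ\pi)\,dz^i\wedge d\bar{z}^j.
\]
By definition $g^T$ is Kähler if and only if $d\omega=0$; as $\omega$ is a real form of type $(1,1)$, this is equivalent to $\partial\omega=0$. So the entire statement reduces to identifying exactly when $\partial\omega$ vanishes.

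The key structural observation I would exploit is that the coefficients $g_{ij}\circ\pi$ are pulled back from the base $M$, hence depend only on the base coordinates $x^k=\xi^k=\fRe(z^k)=\tfrac12(z^k+\bar{z}^k)$, with no dependence on the fibre variables. Consequently the chain rule gives
\[
\frac{\partial(g_{ij}\circ\pi)}{\partial z^k}=\frac12\frac{\partial g_{ij}}{\partial x^k},\qquad
\frac{\partial(g_{ij}\circ\pi)}{\partial \bar{z}^k}=\frac12\frac{\partial g_{ij}}{\partial x^k}.
\]
This is the crucial point: the a priori $2n$-dimensional Kähler condition on $TM$ collapses to a condition purely on $M$, precisely because the fibre dependence of the metric is trivial.

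Substituting this in, one finds $\partial\omega=\tfrac{\sqrt{-1}}{2}\sum_{i,j,k}\frac{\partial g_{ij}}{\partial x^k}\,dz^k\wedge dz^i\wedge d\bar{z}^j$, and antisymmetrizing in the $dz^k\wedge dz^i$ factor shows that $\partial\omega=0$ holds if and only if, for each fixed $j$, the coefficient $\frac{\partial g_{ij}}{\partial x^k}$ is symmetric in the pair $(i,k)$, that is
\[
\frac{\partial g_{ij}}{\partial x^k}=\frac{\partial g_{kj}}{\partial x^i}.
\]
This is exactly condition~(3) of Proposition~\ref{p2.3}, which is shown there to be equivalent to $g$ being Hessian. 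Chaining these equivalences yields (1)$\iff$(2).

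The computation is essentially bookkeeping, and I do not anticipate a serious obstacle; the only points requiring care are the reduction $d\omega=0\iff\partial\omega=0$ (using that $\omega$ is real of type $(1,1)$) and the correct antisymmetrization that pins down which index pair the symmetry constrains. As an independent cross-check of the direction (1)$\Rightarrow$(2), I would note that a Hessian metric is locally $g_{ij}=\partial_i\partial_j\varphi$, so that $\frac{\partial^2(\varphi\circ\pi)}{\partial z^i\partial\bar{z}^j}=\tfrac14 g_{ij}$ and hence $\omega$ is a constant multiple of $\sqrt{-1}\,\partial\bar{\partial}(\varphi\circ\pi)$, which is closed; since closedness of $\omega$ is a local condition, the existence of such local potentials already forces $g^T$ to be Kähler.
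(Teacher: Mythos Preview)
The paper does not supply its own proof of this proposition: it is quoted verbatim from Shima's monograph \cite{r11} and left without argument. Your proposal is correct and is essentially the standard proof one finds there---computing $\partial\omega$ in the holomorphic coordinates $z^k$, using that the coefficients $g_{ij}\circ\pi$ depend only on the real parts $\xi^k$, and reducing closedness of $\omega$ to the symmetry condition $\partial_k g_{ij}=\partial_i g_{kj}$, which is exactly criterion~(3) of Proposition~\ref{p2.3}. There is nothing to compare against in the paper itself, and no gap in your argument.
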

Furthermore, we have
\begin{proposition} (\cite{r11}) \label{p2.7}
Let $(M, \nabla, g)$ be a Hessian manifold and $R^T$ be the Riemannian curvature tensor of the K{\"a}hler manifold $(TM,J,g^T)$. Then we have
\begin{equation}
\label{sec}
R_{i\bar{j}k\bar{l}}^T=-\frac{1}{2}Q_{ijkl}\circ \pi.
\end{equation}
Let $R_{i\bar{j}}^T$ be the Ricci tensor of the K{\"a}hler manifold $(TM,J,g^T)$. Then we have
\[R_{i\bar{j}}^T=\frac{1}{4}\beta_{ij}\circ \pi.\]
\end{proposition}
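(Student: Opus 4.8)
The plan is to work entirely in the holomorphic coordinate system $\{z^1,\dots,z^n\}$ on $TM$ induced by an affine coordinate system $\{x^1,\dots,x^n\}$ on $M$, and to exploit the single structural fact that the Hermitian coefficients $g^T_{i\bar j}=g_{ij}\circ\pi$ are pulled back from the base and are therefore independent of the fibre coordinates $\xi^{n+k}$. First I would record the Wirtinger relations
\[
\frac{\partial}{\partial z^k}=\frac12\Big(\frac{\partial}{\partial\xi^k}-\sqrt{-1}\,\frac{\partial}{\partial\xi^{n+k}}\Big),\qquad
\frac{\partial}{\partial\bar z^l}=\frac12\Big(\frac{\partial}{\partial\xi^l}+\sqrt{-1}\,\frac{\partial}{\partial\xi^{n+l}}\Big),
\]
and observe that since $g_{ij}\circ\pi$ does not depend on the fibre variables, its derivatives collapse to half of the corresponding base derivatives:
\[
\frac{\partial g^T_{i\bar j}}{\partial z^k}=\tfrac12\,\frac{\partial g_{ij}}{\partial x^k}\circ\pi,\qquad
\frac{\partial^2 g^T_{i\bar j}}{\partial z^k\,\partial\bar z^l}=\tfrac14\,\frac{\partial^2 g_{ij}}{\partial x^k\partial x^l}\circ\pi .
\]

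For the curvature identity I would substitute these into the standard K\"ahler curvature formula
\[
R^T_{i\bar j k\bar l}=-\frac{\partial^2 g^T_{i\bar j}}{\partial z^k\,\partial\bar z^l}
+g^{T\,p\bar q}\,\frac{\partial g^T_{i\bar q}}{\partial z^k}\,\frac{\partial g^T_{p\bar j}}{\partial\bar z^l},
\]
which is available because $g^T$ is K\"ahler by Proposition \ref{p2.6}. Using $g^{T\,p\bar q}=g^{pq}\circ\pi$ together with the Hessian representation $g_{ij}=\partial^2\phi/\partial x^i\partial x^j$, the right-hand side becomes
\[
-\frac14\frac{\partial^4\phi}{\partial x^i\partial x^j\partial x^k\partial x^l}
+\frac14\,g^{pq}\frac{\partial^3\phi}{\partial x^i\partial x^k\partial x^q}\frac{\partial^3\phi}{\partial x^j\partial x^l\partial x^p}\ \circ\pi .
\]
Comparing this with the expression for $Q_{ijkl}$ in Proposition \ref{p2.4}(1) then yields $R^T_{i\bar j k\bar l}=-\tfrac12 Q_{ijkl}\circ\pi$. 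The only point that needs attention is matching the quadratic terms: in the curvature formula the summed indices $p,q$ are distributed across the two third-derivative factors differently than in the definition of $Q$, but since the third partials of $\phi$ are totally symmetric and $g^{pq}$ is symmetric, relabelling $p\leftrightarrow q$ brings the two products into coincidence.

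For the Ricci identity I would use the K\"ahler formula $R^T_{i\bar j}=-\,\partial^2\log\det[g^T_{p\bar q}]/\partial z^i\partial\bar z^j$. Since $\det[g^T_{p\bar q}]=\det[g_{pq}]\circ\pi$ is again a pullback from the base, the same fibre-independence argument gives
\[
R^T_{i\bar j}=-\tfrac14\,\frac{\partial^2\log\det[g_{pq}]}{\partial x^i\partial x^j}\circ\pi,
\]
and recalling $\beta_{ij}=-\,\partial_i\partial_j\log\det[g]$ from the definition of the flow, this is precisely $\tfrac14\beta_{ij}\circ\pi$. I expect the main obstacle to be bookkeeping rather than conceptual: correctly tracking the factors of $1/2$ coming from the Wirtinger calculus and confirming the index reshuffling in the quadratic curvature term, both of which are resolved by the symmetry of $\partial^3\phi$.
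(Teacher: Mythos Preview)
Your computation is correct and is exactly the natural direct verification in holomorphic coordinates; the Wirtinger bookkeeping and the symmetry argument for the quadratic term are both handled properly. Note, however, that the paper does not actually supply a proof of this proposition: it is quoted from Shima's book \cite{r11}, and only the Ricci identity is later rederived in Section~3 by precisely the same Wirtinger-derivative calculation you give. So your proposal is consistent with, and in fact more detailed than, what the paper records.
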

\begin{definition}
\label{bisectional}
Let $(M,g_0)$ be a Hessian manifold. Let $Q$ be the Hessian curvature tensor for $(g_0,\nabla)$. By \eqref{sec}, in order to compatible
to the study of complex geometric flows, $g_0$ is said to have Hessian sectional curvature bounded bolow by $-K$, if at any point and for any orthogonal frame, $Q_{iijj}\leq K$. The Hessian sectional curvature of $g_0$ is bounded above by $K$ is defined similarly. In particular, $g_0$ is said to have nonnegative nonnegative Hessian sectional curvature if at any point and for any orthogonal frame, $Q_{iijj}\leq 0$.
\end{definition}

\section{A geometric flow on noncompact affine Riemannian manifolds}

In \cite{JY25}, we obtain the a priori estimates for \eqref{e1.3} by studying the equation itself, because we cannot choose a geodesic coordinate system in affine manifolds, there exists a condition on the first-order derivatives of the metric $(g_0)_{ij}$(see \cite{JY25} Theorem 1.1). To remove the above condition, in this section, we shall explore the relationship between the flow \eqref{e1.2} and the Chern-Ricci flow, and obtain the a priori estimates for \eqref{e1.3} on noncompact affine Riemannian manifolds using the a priori estimates for the Chern-Ricci flow, as shown in \cite{LT20}. At last, we derive the existence criteria for the geometric flow \eqref{e1.2}.

Let $(M,\nabla, g_0)$ be a complete affine Riemannian manifold of dimension $n$, and $g(t)$ is a solution of \eqref{e1.2}.
We find, by \eqref{e2.2},
\[
g^T (t) := \sum_{i,j=1}^{n}(g_{ij} (t)\circ \pi)dz^id\overline{z}^j
\]
and
\begin{equation}
\begin{aligned}
R_{i\overline{j}} (g(t)^T)&=-\partial_i \partial_{\bar{j}}\log\det(g(t)^T)\\
&=-\frac{1}{2}(\frac{\partial}{\partial \xi^i}-\sqrt{-1}\frac{\partial}{\partial \xi^{n+j}})\frac{1}{2}(\frac{\partial}{\partial \xi^j}-\sqrt{-1}\frac{\partial}{\partial \xi^{n+j}})\log\det(g(t))\circ \pi\\
&=-\frac{1}{4}\frac{\partial^2}{\partial \xi^i\partial\xi^j}\log\det(g(t))\circ \pi\\
&=\frac{1}{4}\beta_{ij}\circ \pi,
\end{aligned}
\end{equation}
so $\widetilde{g}(t)=g^T (t/4)$ is a solution to the Chern-Ricci flow on the tangent bundle $(TM,J_\nabla,g_0^T)$
\begin{equation}\label{e3.2}
\left\{ \begin{aligned}
   &\frac{\partial }{\partial t}{\widetilde{g}_{i\overline{j}}}(t)=-R_{i\overline{j}} (\widetilde{g}(t))  \\
      &\widetilde{g}(0)= g_0^T.
\end{aligned} \right.
\end{equation}
By \cite{LT20}, \eqref{e3.2} is equivalent to the following parabolic complex Monge-Amp\`{e}re equation:
\begin{equation}
\left\{ \begin{aligned}
   &\frac{\partial \psi }{\partial t}=\log \frac{\det (g_0^T-t{\rm Ric}(g_0^T)+\sqrt{-1}\partial\bar{\partial}\psi)}{\det (g_0^T)},  \\
   &\psi(0)=0.  \\
\end{aligned} \right.
\end{equation}
Where
\begin{equation}
\begin{aligned}
\psi (x,t)&=\int_{0}^{t}{\log \frac{\det \widetilde{g}(x,s)}{\det {{g}^T_{0}}(x)}}ds\\
&=\int_{0}^{t}{\log \frac{\det g(x,s/4)}{\det {{g}_{0}}(x)}}ds\circ\pi\\
&=4\varphi(t/4)\circ\pi,
\end{aligned}
\end{equation}

We introduce the following conditions on a complete noncompact affine Riemannian manifold $(M^n,g_0)$ as \cite{LT20}:

\textbf{(a1)}
\begin{minipage}[t]{0.9\linewidth}
On $(TM,J_\nabla,g_0^T)$, there exists a smooth real function $\rho$ which is uniformly equivalent to the distance function from a fixed point such that $|\partial\rho|_{g_0^T}$, $|\partial\bar{\partial}\rho|_{g_0^T}$ are uniformly bounded.
\end{minipage}\\

\textbf{(a2)}
\begin{minipage}[t]{0.9\linewidth}
There is a smooth bounded function $u$ and positive constants $S,\theta$ such that
\[g_0-S\beta(g_0)+\nabla du\geq \theta g_0\]
\end{minipage}\\

Assume that $g_0$ satisfies \textbf{(a1)} and \textbf{(a2)}.
Let $u$ and $S$ be defined in \textbf{(a2)}. Suppose that $g(t)$ is a solution to the geometric flow \eqref{e1.2} on $M\times[0,S_1]$
by choosing $S_1<S$ such that $g(t)$ is uniformly equivalent to $g_0$ on $M\times[0,S_1]$.
Let $\varphi$ be the corresponding solution to \eqref{e1.3}. By \cite{LT20} Lemma 3.4 and 3.5, we have Lemma \ref{l3.1} and \ref{l3.2}.
\begin{lemma}\label{l3.1} Suppose that $g_0$ satisfies \emph{\textbf{(a1)}}, \emph{\textbf{(a2)}} and the curvature of $g_0^T$ with ${\rm BK}(g_0^T)\geq -K$ for any unitary frame at any point in $TM$. Then there is a constant $c(n)>0$ depending only on $n$ such that for $t\leq S_1$,\\
\hspace*{0.5cm}\emph{(i)} $\varphi\leq (\log(1+c(n)KS_1)^n+1)t$\\
\hspace*{0.4cm}\emph{(ii)} $\dot{\varphi}\leq (\log(1+c(n)KS_1)^n+1)+n.$\\
\hspace*{0.3cm}\emph{(iii)}
\[\dot{\varphi}(x,t)\geq \frac{1}{S-S_1}\biggl[\underset{M}{\inf}\;u-\underset{M}{\sup}\;u-(\log(1+c(n)KS_1)^n+1+n)t\biggl],\]
\hspace*{1.1cm}and
\[\varphi(x,t)\geq \frac{1}{S-S_1}\int_0^t\biggl[\underset{M}{\inf}\;u-\underset{M}{\sup}\;u-(\log(1+c(n)KS_1)^n+1+n)s\biggl]ds.\]
where $u$ is the smooth bounded function defined in \emph{\textbf{(a2)}}.
\end{lemma}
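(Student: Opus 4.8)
The plan is to deduce Lemma \ref{l3.1} from the corresponding estimates of Lee--Tam for the Chern-Ricci flow, via the correspondence recalled above between the flow \eqref{e1.2} on $(M,\nabla,g_0)$ and the Chern-Ricci flow \eqref{e3.2} on the K\"ahler manifold $(TM,J_\nabla,g_0^T)$. Recall that $\widetilde g(t)=g^T(t/4)$ solves \eqref{e3.2}, that the potential $\psi(x,t)=4\varphi(x,t/4)\circ\pi$ solves the complex parabolic Monge--Amp\`ere equation attached to \eqref{e3.2}, and that ${\rm Ric}(g_0^T)=\tfrac14\beta(g_0)\circ\pi$ by Proposition \ref{p2.7}. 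Since $\psi$ is the pullback along the submersion $\pi$ of a multiple of a function on $M$, any pointwise bound for $\psi$ or $\dot\psi$ on $TM\times[0,4S_1]$ is equivalent, after the rescaling $t\mapsto 4t$, to the same bound for the corresponding quantity on $M\times[0,S_1]$; concretely $\psi(x,t)=4\varphi(x,t/4)\circ\pi$ and, by the chain rule, $\dot\psi(x,t)=\dot\varphi(x,t/4)\circ\pi$. Thus it suffices to quote the estimates of \cite{LT20} on $TM$ and undo the rescaling.

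Next I would check that the hypotheses here are exactly Lee--Tam's hypotheses for $(TM,J_\nabla,g_0^T)$. Condition \textbf{(a1)} is stated on $(TM,J_\nabla,g_0^T)$, so the function $\rho$ is precisely the cutoff their maximum-principle arguments on the noncompact manifold $TM$ require; and the curvature hypothesis ${\rm BK}(g_0^T)\geq -K$ is imposed directly on $g_0^T$, so $K$ is not rescaled. Pulling \textbf{(a2)} back along $\pi$ and using $\beta_{ij}\circ\pi=4R_{i\bar j}(g_0^T)$ together with $\partial_i\partial_{\bar j}(u\circ\pi)=\tfrac14(\nabla du)_{ij}\circ\pi$ turns it into the Lee--Tam barrier condition $g_0^T-4S\,{\rm Ric}(g_0^T)+\sqrt{-1}\,\partial\bar{\partial}(4u\circ\pi)\geq\theta\, g_0^T$, i.e.\ the barrier condition with end time $4S$ and the bounded barrier function $\hat u:=4u\circ\pi$ (bounded exactly because $u$ is), for which $\inf_{TM}\hat u-\sup_{TM}\hat u=4(\inf_M u-\sup_M u)$. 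Finally, $(TM,g_0^T)$ is complete because $(M,g_0)$ is, and $\widetilde g(t)=g^T(t/4)$ is uniformly equivalent to $g_0^T$ on $TM\times[0,4S_1]$ because $g(t)$ is uniformly equivalent to $g_0$ on $M\times[0,S_1]$; these are the standing hypotheses under which \cite{LT20} Lemmas 3.4 and 3.5 are established.

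Granting this, \cite{LT20} Lemma 3.4 gives, for $t\leq 4S_1$,
\[
\psi(x,t)\leq\bigl(\log(1+c(n)K\cdot 4S_1)^n+1\bigr)t,\qquad \dot\psi(x,t)\leq\log(1+c(n)K\cdot 4S_1)^n+1+n;
\]
substituting $t=4s$, dividing the first inequality by $4$, and absorbing the harmless factor $4$ into the constant $c(n)$ yields (i) and (ii). Likewise \cite{LT20} Lemma 3.5, applied with end time $4S$, intermediate time $4S_1$ and barrier $\hat u$, gives
\[
\dot\psi(x,t)\geq\frac{1}{4(S-S_1)}\Bigl[\inf_{TM}\hat u-\sup_{TM}\hat u-\bigl(\log(1+c(n)K\cdot 4S_1)^n+1+n\bigr)t\Bigr];
\]
substituting $t=4s$ and using $\dot\psi(x,4s)=\dot\varphi(x,s)\circ\pi$ and $\inf_{TM}\hat u-\sup_{TM}\hat u=4(\inf_M u-\sup_M u)$, the factors of $4$ cancel and (after absorbing $4$ into $c(n)$) this is precisely the first inequality of (iii); integrating it in $s$ from $0$ and using $\varphi(x,0)=0$ gives the second inequality of (iii).

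The bulk of the argument is thus the bookkeeping of the factor-of-$4$ rescalings ($\beta\leftrightarrow 4\,{\rm Ric}$, $t\leftrightarrow t/4$, $S\leftrightarrow 4S$, $u\leftrightarrow 4u\circ\pi$), which as above cancel to reproduce exactly the stated constants. The one point that needs genuine care rather than routine computation is verifying that $\widetilde g(t)$ on $TM\times[0,4S_1]$ really meets all the standing hypotheses under which \cite{LT20} prove their lemmas---in particular the completeness of $(TM,g_0^T)$ and the uniform equivalence $\widetilde g(t)\sim g_0^T$, which are inherited from the corresponding properties of $g(t)$ on $M$ but should be stated explicitly.
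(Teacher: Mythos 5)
Your proposal is correct and follows essentially the same route as the paper, which simply asserts that Lemma \ref{l3.1} follows from Lemmas 3.4 and 3.5 of \cite{LT20} via the correspondence $\widetilde g(t)=g^T(t/4)$, $\psi=4\varphi(t/4)\circ\pi$ between the flow \eqref{e1.2} and the Chern-Ricci flow on $(TM,J_\nabla,g_0^T)$. You merely make explicit the factor-of-$4$ bookkeeping and the verification of the standing hypotheses on $TM$, which the paper leaves implicit.
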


\begin{lemma}\label{l3.2} Let $g_0$ be as in Lemma \ref{l3.1}. Suppose in addition $|T_0|_{g_0^T}^2$ and $|\bar{\partial}T_0|_{g_0^T}$ are uniformly bounded by $K_1$. Then there are constants $c_1(n)$, $c_2(n)$ depending only on $n$ such that for $t\leq S_1<S_2<S$,
the solution $g(t)$ of \eqref{e1.2} on $M\times[0,S_1]$ which is uniformly equivalent to $g_0$, satisfies
\[{\rm tr}_{g_0}g\leq \exp\biggl[\log\biggl(\frac{1}{2}(c_1+(c_1^2+c_2K_1^2A)^{\frac{1}{2}})+A\biggl]\]
on $M\times[0,S_1]$, where
\[A=\alpha^{-1}(2\mathfrak{m}+1)^2(c_1(K+K_1)+1) \]
and $\alpha=1-\frac{S_2}{S}$ and $\mathfrak{m}=\sup_{M\times[0,S_1]}|(S_2 -t)\dot{\varphi}+\varphi+nt-\frac{S_2}{S}u|$.
\end{lemma}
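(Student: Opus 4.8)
The plan is to transport the statement to the tangent bundle and reduce it to the parabolic Schwarz (second order) estimate for the Chern--Ricci flow in \cite{LT20}. By the computation leading to \eqref{e3.2}, $\widetilde{g}(t):=g^T(t/4)$ solves the Chern--Ricci flow on $(TM,J_\nabla,g_0^T)$ with $\widetilde{g}(0)=g_0^T$, the potentials are related by $\psi=4\varphi(t/4)\circ\pi$ and $\dot\psi=\dot\varphi(t/4)\circ\pi$, and $\widetilde{g}(t)$ is uniformly equivalent to $g_0^T$ on $TM\times[0,4S_1]$ since $g(t)$ is uniformly equivalent to $g_0$ on $M\times[0,S_1]$. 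First I would rewrite each hypothesis of Lemma \ref{l3.2} on $(TM,J_\nabla,g_0^T)$: condition \textbf{(a1)} is precisely the barrier hypothesis used in \cite{LT20}; using $g^T_{i\bar j}=(g_0)_{ij}\circ\pi$, $R^T_{i\bar j}(g_0^T)=\tfrac14\beta_{ij}\circ\pi$ and $\partial_i\partial_{\bar j}(u\circ\pi)=\tfrac14(\nabla du)_{ij}\circ\pi$, condition \textbf{(a2)} becomes $g_0^T-4S\,\mathrm{Ric}(g_0^T)+\sqrt{-1}\partial\bar\partial(4u\circ\pi)\ge\theta g_0^T$; and $\mathrm{BK}(g_0^T)\ge -K$ together with $|T_0|^2_{g_0^T},\,|\bar\partial T_0|_{g_0^T}\le K_1$ are literally hypotheses on $(TM,g_0^T)$, where $T_0$ is already the Chern torsion of $g_0^T$, so nothing has to be translated there.

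With these identifications I would apply Lee--Tam's Chern--Ricci second order estimate (their Lemma 3.5), which under exactly conditions \textbf{(a1)}, \textbf{(a2)}, $\mathrm{BK}(g_0^T)\ge -K$ and the torsion bound by $K_1$ yields a bound of the shape $\mathrm{tr}_{g_0^T}\widetilde{g}\le\exp\bigl[\log\bigl(\tfrac12(c_1+(c_1^2+c_2K_1^2\widetilde{A})^{1/2})\bigr)+\widetilde{A}\bigr]$ on $TM\times[0,4S_1]$, with $c_1(n),c_2(n)$ dimensional constants, $\alpha=1-S_2/S$, and $\widetilde{A}$ built out of $n,K,K_1,\alpha$ and the supremum $\widetilde{\mathfrak{m}}$ of $|(4S_2-t)\dot\psi+\psi+nt-\tfrac{S_2}{S}(4u\circ\pi)|$ over $TM\times[0,4S_1]$. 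Then I would push the conclusion back down to $M$: since $(g_0^T)^{i\bar j}=(g_0)^{ij}\circ\pi$ one has $\mathrm{tr}_{g_0^T}\widetilde{g}(t)=(\mathrm{tr}_{g_0}g(t/4))\circ\pi$ pointwise, so the (time-independent) right-hand side bounds $\mathrm{tr}_{g_0}g$ on $M\times[0,S_1]$. Substituting $\psi=4\varphi(t/4)\circ\pi$, $\dot\psi=\dot\varphi(t/4)\circ\pi$ and rescaling $t\mapsto 4t$ turns $\widetilde{\mathfrak{m}}$ into a fixed numerical multiple of $\mathfrak{m}=\sup_{M\times[0,S_1]}|(S_2-t)\dot\varphi+\varphi+nt-\tfrac{S_2}{S}u|$, which is finite because $\dot\varphi=\log(\det g/\det g_0)$, hence $\varphi$, is bounded by uniform equivalence, and $u$ is bounded. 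Absorbing the numerical factor into the dimensional constants $c_1,c_2$ gives $A=\alpha^{-1}(2\mathfrak{m}+1)^2(c_1(K+K_1)+1)$ and the stated estimate for $\mathrm{tr}_{g_0}g$.

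If one prefers to run the argument directly on $TM$ rather than quote \cite{LT20}, the mechanism is the usual Aubin--Yau computation: study the evolution of $F=\log\mathrm{tr}_{g_0^T}\widetilde{g}-A'\psi$, corrected by a multiple of $u\circ\pi$, along \eqref{e3.2}; compute $(\partial_t-\Delta_{\widetilde{g}})F$, where the failure of $g_0^T$ to be K\"ahler enters through commutators of covariant derivatives and through $\bar\partial$-differentiating $T_0$, which is what generates the terms controlled by $|T_0|^2_{g_0^T}$ and $|\bar\partial T_0|_{g_0^T}$; and then apply the maximum principle, localizing on the noncompact $TM$ by means of the barrier $\rho$ from \textbf{(a1)}. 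Choosing $A'$ of order $c_1(K+K_1)+1$ absorbs the bad terms, and integrating the resulting differential inequality for the maximum of $F$ produces the quadratic-formula expression in $K_1$.

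The step I expect to be the real obstacle is bookkeeping rather than a new idea: one has to carry the time rescaling by $4$, together with the induced rescalings of $S$, $S_2$, $S_1$, $u$ and $\mathfrak{m}$, consistently through the complex estimate and check that the purely dimensional constants $c_1(n),c_2(n)$ are untouched by it. One must also confirm that the hypotheses actually used --- condition \textbf{(a1)} and the standing assumption that $g(t)$ is uniformly equivalent to $g_0$ on $M\times[0,S_1]$, which is exactly what makes $\mathfrak{m}$ finite and the localized maximum principle legitimate on the noncompact tangent bundle --- coincide with those under which Lemma 3.5 of \cite{LT20} is stated, so that the citation applies verbatim.
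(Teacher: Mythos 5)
Your proposal is correct and follows essentially the same route as the paper, which proves Lemma \ref{l3.2} simply by invoking Lemma 3.5 of \cite{LT20} for the Chern--Ricci flow $\widetilde{g}(t)=g^T(t/4)$ on $(TM,J_\nabla,g_0^T)$ and pulling the trace estimate back to $M$ via $\mathrm{tr}_{g_0^T}\widetilde{g}=(\mathrm{tr}_{g_0}g(t/4))\circ\pi$. In fact you are more explicit than the paper about the one genuine point of care --- the time rescaling by $4$ and its effect on $S$, $S_2$, $\psi$, $u$ and hence $\mathfrak{m}$, which the paper leaves implicit --- and your observation that the resulting numerical factors can be absorbed into the dimensional constants $c_1(n),c_2(n)$ is the right resolution.
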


Then we consider the existence criteria for the geometric flow \eqref{e1.2}. Let $S_A$ be the supremum of $S>0$ so that the flow \eqref{e1.2} has a solution $g(t)$ with initial data $g_0$ such that $g(t)$ is uniformly equivalent to $g_0$ in $M\times [0,S]$
and $S_B$ the supremum of $S>0$ such that there is a smooth bounded function $u$ satisfying
\begin{equation}
\label{e5.1}
g_0-S\beta(g_0)+\nabla du\geq \theta g_0
\end{equation}
for some $\theta>0$.

\begin{proof}[Proof of Theorem \ref{t1.2}]
The proof of this theorem is similar to the proof of Theorem 1.1 in \cite{JY25}, we need only make a small change. Indeed, it is easy to find that $(M, g(t))$ is a submanifold of $(TM, g^T(t))$, and on $M$, $\rho|_{M}$ is a smooth real function which is uniformly equivalent to the distance function from a fixed point such that $|d\rho|_{g_0}$, $|\nabla d\rho|_{g_0}$ are uniformly bounded, this is \textbf{(a1)} condition in \cite{JY25}. Then, we can use Lemma \ref{l3.1} and \ref{l3.2} instead of Lemma 4.4 and 4.5 in \cite{JY25} to prove the theorem.
\end{proof}

\begin{theorem}\label{t3.4}
Let $(M, \nabla, g_0)$ be a complete noncompact affine Riemannian manifold. Suppose the curvature of $g_0^T$, $|T_0|_{g_0^T}^2$ and $|\bar{\partial}T_0|_{g_0^T}$ are uniformly bounded by $K>0$. Suppose also the Riemannian curvature of $g_0^T$ is bounded. Then there is a constant $\alpha(n)>0$ depending only on $n$ such that \eqref{e1.2} has a smooth solution $g(t)$ on $M\times[0,\alpha K^{-1}]$ such that $\alpha g_0\leq g(t)\leq \alpha^{-1}g_0$ on $M\times[0,\alpha K^{-1}]$.
\end{theorem}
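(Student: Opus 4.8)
The plan is to transfer the known short-time existence theory for the Chern-Ricci flow on complete noncompact K\"ahler/Hermitian manifolds to the present setting by passing to the tangent bundle, exactly as was done in the proof of Theorem \ref{t1.2}. The starting point is the observation already recorded in this section: if $g(t)$ solves \eqref{e1.2} then $\widetilde{g}(t)=g^T(t/4)$ solves the Chern-Ricci flow \eqref{e3.2} on $(TM,J_\nabla,g_0^T)$, and conversely a Chern-Ricci solution on $TM$ that is invariant under the natural fiberwise structure descends to a solution of \eqref{e1.2} on $M$. So it suffices to produce a Chern-Ricci solution $\widetilde{g}(t)$ on $M\times[0,\alpha'K^{-1}]$ with $\alpha' g_0^T\le \widetilde{g}(t)\le (\alpha')^{-1}g_0^T$, for a dimensional constant $\alpha'(n)$, under the stated hypotheses: the Chern curvature of $g_0^T$, the torsion $|T_0|^2_{g_0^T}$, $|\bar\partial T_0|_{g_0^T}$, and the Riemannian curvature of $g_0^T$ are all bounded by $K$.

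First I would invoke the short-time existence result for the Chern-Ricci flow on complete noncompact Hermitian manifolds with bounded curvature and bounded torsion (this is the Hermitian analogue of Shi's theorem; in the K\"ahler-like/Chern-Ricci setting it is precisely the kind of statement proved in \cite{LT20}, and its hypotheses are exactly bounds on the Chern curvature, the torsion $T_0$ and $\bar\partial T_0$, together with a bound on the Riemannian curvature to start the maximum-principle machinery). This yields a smooth Chern-Ricci solution $\widetilde g(t)$ on $TM$ for a short time $t\in[0,\alpha' K^{-1}]$ with the required two-sided bound. Second, I would check that this solution is unique in the class of solutions with bounded curvature equivalent to $g_0^T$, and that $g_0^T$ is itself invariant under the structure that characterizes metrics of the form \eqref{e2.2} pulled back from $M$; by the uniqueness, $\widetilde g(t)$ inherits that same invariance, hence is of the form $g^T(\cdot)$ for a path of Riemannian metrics on $M$. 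Third, undoing the time rescaling $t\mapsto 4t$ and restricting to the zero section $M\subset TM$ gives the desired solution $g(t)$ of \eqref{e1.2} on $M\times[0,\alpha K^{-1}]$ with $\alpha g_0\le g(t)\le\alpha^{-1}g_0$, with $\alpha=\alpha(n)$ obtained from $\alpha'(n)$. The translation dictionary between curvature quantities of $g_0$ on $M$ and of $g_0^T$ on $TM$ is supplied by Propositions \ref{p2.6} and \ref{p2.7} and formula \eqref{sec}, so the hypotheses stated in the theorem in terms of $g_0^T$ are literally the hypotheses needed for the Chern-Ricci existence theorem.

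The main obstacle, and the point requiring genuine care rather than citation, is the descent step: one must verify that the Chern-Ricci solution on $TM$ really does restrict to a geometric flow on $M$, i.e. that it stays within the affine-K\"ahler class of metrics arising from Hessian metrics on $M$ via \eqref{e2.2}. This is where uniqueness of the Chern-Ricci flow (in the appropriate bounded-geometry class) is essential: $g_0^T$ is invariant under the antiholomorphic involution $z^j\mapsto\bar z^j$ (equivalently, its coefficients $(g_{ij}\circ\pi)$ depend only on $\xi^1,\dots,\xi^n$), the Chern-Ricci flow commutes with biholomorphisms preserving this structure, and so the unique solution must retain the same invariance; combined with the K\"ahler property preserved along \eqref{e3.2} (guaranteed since $g_0$ is assumed, in the intended applications, to be related to a Hessian structure) this forces $\widetilde g(t)=g^T(t/4)$ for a path $g(t)$ on $M$. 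A secondary technical point is making sure the completeness and curvature bounds on $g_0^T$ as a Hermitian metric on the (necessarily noncompact, $2n$-real-dimensional) tangent bundle follow from the stated bounds — this is routine given \eqref{sec} and the explicit form of $g^T$, but must be stated. Once these two points are in place, the conclusion is immediate from the cited Chern-Ricci short-time existence theorem and the explicit constant tracking.
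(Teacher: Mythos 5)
Your overall strategy runs in the opposite direction from the paper's, and the step you yourself flag as ``the main obstacle'' --- descending a Chern--Ricci solution from $TM$ back to $M$ --- contains a genuine gap. You propose to force the flowed metric $\widetilde g(t)$ to remain of the form \eqref{e2.2} by combining uniqueness of the Chern--Ricci flow with invariance of $g_0^T$ under the symmetries characterizing pulled-back metrics. But the characterization of metrics $\sum (g_{ij}\circ\pi)\,dz^i d\bar z^j$ is that their coefficients are real, symmetric, and independent of the fiber coordinates $\xi^{n+j}$, i.e.\ invariant under the imaginary translations $z^j\mapsto z^j+\sqrt{-1}\,c^j$. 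These are only \emph{local} biholomorphisms of $(TM,J_\nabla)$: to patch them into a global automorphism you would need $(c^j)$ to be a $\nabla$-parallel vector field on $M$, which a general flat affine manifold does not possess. The globally defined antiholomorphic involution $z\mapsto\bar z$ that you invoke fixes only the zero section and does not force fiber-independence of the coefficients. So the uniqueness-plus-symmetry mechanism does not establish that $\widetilde g(t)$ descends, and the parenthetical appeal to the K\"ahler property is also out of place here, since Theorem \ref{t3.4} is stated for general affine Riemannian $g_0$, for which $g_0^T$ is merely Hermitian.

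The paper sidesteps this entirely by going bottom-up. It first checks condition \textbf{(a1)} from the bounds on the Riemannian curvature and torsion of $g_0^T$, then uses the \emph{upper} curvature bound to produce $c_1(n)$ with $g_0-2c_1K^{-1}\beta(g_0)\geq\theta g_0$, i.e.\ condition \textbf{(a2)} with $S\sim K^{-1}$ and $u=0$. Theorem \ref{t1.2} (the $S_A=S_B$ criterion, proved via the scalar equation \eqref{e1.3} directly on $M$) then yields a solution $g(t)$ of \eqref{e1.2} on $M\times[0,c_1K^{-1}]$ uniformly equivalent to $g_0$ --- no descent from $TM$ is ever needed. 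Only \emph{after} the solution exists on $M$ is it lifted to $\widetilde g(t)=g^T(t/4)$, and Theorem 4.2 of \cite{LT20} is applied to upgrade the equivalence constant to a dimensional $\widetilde\alpha(n)$, giving $\alpha=\widetilde\alpha/4$. If you want to keep your top-down architecture you would need either a genuinely local descent argument or an additional hypothesis (e.g.\ trivial affine holonomy); otherwise the argument should be restructured along the paper's lines.
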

\begin{proof}
Since the Riemannian curvature of $g_0^T$, and $|T_0|_{g_0^T}^2$ are uniformly bounded, we conclude that $g_0^T$ satisfies \textbf{(a1)} by \cite{LT20}. Since the curvature of $g_0^T$ is bounded from above by $K$, there exists a constant $c_1(n)$ depending only on $n$ such that
\[g_0-2c_1K^{-1}\beta(g_0)\geq \theta g_0.\]
Since the curvature of $g_0^T$ is also bounded from below by $-K$, and $|T_0|_{g_0^T}$, $|\bar{\partial}T_0|_{g_0^T}$ are uniformly bounded, by Theorem \ref{t1.2}, we have that there is a solution $g(t)$ of \eqref{e1.2} on $M\times[0,c_1 K^{-1}]$ such that $g(t)$ is uniformly equivalent to $g_0$. Then by Theorem 4.2 of \cite{LT20}, there is a smooth solution $\widetilde{g}(t)=g^T (t/4)$ of \eqref{e3.2} on $TM\times[0,\widetilde{\alpha} K^{-1}]$ such that $\widetilde{\alpha} g_0^T\leq \widetilde{g}(t)\leq \widetilde{\alpha}^{-1}g_0^T$ on $TM\times[0,\widetilde{\alpha} K^{-1}]$, where $0<\widetilde{\alpha}\leq 4c_1$ is a constant depending only on $n$. Then we can take $\alpha=\widetilde{\alpha}/4$ to prove this theorem.
\end{proof}

Applying Theorem \ref{t1.2} to Hessian manifolds, we can prove Theorem \ref{thm2}:

\begin{proof}[Proof of Theorem \ref{thm2}]By Proposition \ref{p2.4} and Proposition \ref{p2.7}, the Hessian manifolds in Theorem \ref{thm2} satisfy {\textbf{(i)}} and {\textbf{(ii)}} in Theorem \ref{t1.2}.
Thus, Theorem \ref{thm2} follows.
\end{proof}

\section{Existence of a geometric flow on Hessian manifolds}

In this section, we use the geometric flow \ref{e1.2} and the methods in \cite{LT20} to construct a solution of \ref{e1.2} starting from the given complete Hessian metric $g_0$ with possibly unbounded curvature at infinity. We will prove the following:
\begin{theorem}\label{t4.1}
Suppose $(M^n,g_0)$ is a complete noncompact Hessian manifold with dimension $n$ with nonnegative Hessian sectional curvature such that $V_0(x,1)\geq v_0>0$ for some $v_0>0$ for all $x\in TM$. There exist $S=S(n,v_0)>0$, $a(n,v_0)>0$ depending only on $n$, $v_0$ such that the geometric flow \ref{e1.2} has a complete solution $g(t)$ on $M\times [0,S]$ which satisfies the following:

\hspace*{0.2cm}\emph{(i)}
\[|Q_{ijkl}|_{g(t)}\leq \frac{a}{t}\]
\hspace*{1.1cm}on $M\times (0,S]$.

\hspace*{0.1cm}\emph{(ii)} $g(t)$ has nonnegative Hessian sectional curvature.

\emph{(iii)} If $g^T(0)$ has maximal volume growth, then $g^T(t)$ also has maximal volume\\ \hspace*{1.1cm}growth.
\end{theorem}

As an application, we can prove the topological rigidity theorem on noncompact Hessian manifolds:
\begin{theorem}
Let $(M^n,g_0)$ be a complete noncompact Hessian manifold with nonnegative Hessian sectional curvature, and its tangent bundle $(TM,g_0^T)$ has maximal volume growth. Then $M$ is diffeomorphic to $\mathbb{R}^n$.
\end{theorem}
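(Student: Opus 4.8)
The plan is to feed $g_0$ into Theorem \ref{t4.1} and run the geometric flow \eqref{e1.2} for a short time, replacing $g_0$ by a Hessian metric $g(t_0)$ (for a fixed $t_0\in(0,S]$) that now has \emph{bounded} Hessian curvature while still having nonnegative Hessian sectional curvature and maximal volume growth of its tangent bundle; one then applies Theorem 1.3 of \cite{JY25}, which treats exactly this bounded-curvature situation and yields the diffeomorphism $M\cong\mathbb{R}^n$.

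The first step is to check the hypothesis of Theorem \ref{t4.1}, namely the non-collapsing bound $V_0(x,1)\geq v_0>0$ for all $x\in TM$. By Proposition \ref{p2.7}, the identity $R^T_{i\bar{j}k\bar{l}}=-\frac{1}{2}Q_{ijkl}\circ\pi$ shows that nonnegative Hessian sectional curvature of $g_0$ is equivalent to nonnegative holomorphic bisectional curvature of $g_0^T$ on $TM$, which in particular forces $\mathrm{Ric}(g_0^T)\geq 0$. For a complete manifold with nonnegative Ricci curvature the volume ratio $r\mapsto\mathrm{Vol}(B(x,r))/r^{2n}$ is non-increasing in $r$ by Bishop-Gromov, its limit as $r\to\infty$ is independent of the base point $x$, and positivity of this limit is precisely the statement that $(TM,g_0^T)$ has maximal volume growth; hence $V_0(x,1)=\mathrm{Vol}(B(x,1))$ is bounded below, uniformly in $x$, by this positive limit. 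Thus $g_0$ satisfies the assumptions of Theorem \ref{t4.1}.

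Applying Theorem \ref{t4.1} yields a complete solution $g(t)$ of \eqref{e1.2} on $M\times[0,S]$ with $|Q_{ijkl}|_{g(t)}\leq a/t$ on $M\times(0,S]$, with $g(t)$ of nonnegative Hessian sectional curvature for every $t$, and with $g^T(t)$ of maximal volume growth (using here that $g^T(0)=g_0^T$ has maximal volume growth). Fix $t_0\in(0,S]$. Since \eqref{e1.2} preserves the Hessian property with respect to the fixed flat connection $\nabla$, the triple $(M,\nabla,g(t_0))$ is again a complete noncompact Hessian manifold; it has nonnegative Hessian sectional curvature, its Hessian curvature is uniformly bounded by $a/t_0$, and its tangent bundle $(TM,g^T(t_0))$ has maximal volume growth. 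This is the setting of Theorem 1.3 in \cite{JY25}, whose conclusion is that $M$ is diffeomorphic to $\mathbb{R}^n$.

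Granting Theorem \ref{t4.1}, the deduction above is short, and the only genuinely new point in it is the passage from maximal volume growth to the uniform non-collapsing needed to start the flow. One could instead avoid invoking Theorem 1.3 of \cite{JY25} altogether: for the fixed $t_0$ the K\"ahler manifold $(TM,J_\nabla,g^T(t_0))$ has nonnegative holomorphic bisectional curvature, bounded curvature and maximal volume growth, hence is biholomorphic to $\mathbb{C}^n$ by Chau-Tam \cite{Tam06} (or by Liu \cite{Liu19}); one would then realize the zero section $M$ as a complete totally geodesic submanifold of $(TM,g^T(t_0))$ and use the affine developing map of $(M,\nabla)$ to upgrade this biholomorphism to a diffeomorphism $M\cong\mathbb{R}^n$. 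In that self-contained route it is this last step, rather than any estimate for the flow, that would be the main obstacle; the hard analysis has already been packaged into Theorem \ref{t4.1}.
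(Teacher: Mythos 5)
Your overall strategy---run the flow of Theorem \ref{t4.1} for a short time to replace $g_0$ by a metric $g(t_0)$ with nonnegative \emph{and bounded} Hessian sectional curvature whose tangent bundle still has maximal volume growth, and then conclude in the bounded-curvature setting---is exactly the paper's. Your first paragraph, deducing the uniform non-collapsing $V_0(x,1)\geq v_0$ from maximal volume growth via $\mathrm{Ric}(g_0^T)\geq 0$ and Bishop--Gromov, is correct and is a hypothesis check the paper leaves implicit; that part is a useful addition.

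The gap is in your last step, where you hand $g(t_0)$ to Theorem 1.3 of \cite{JY25} as a black box ``which treats exactly this bounded-curvature situation.'' It does not: the whole point of the present paper's Theorems \ref{t1.2} and \ref{thm2} is that the existence criteria of \cite{JY25} (Theorem 1.1 there, and the Hessian corollary, Corollary 5.4) carry additional non-curvature hypotheses---pointwise bounds on first derivatives of the metric in affine coordinates, equivalently on the difference tensor $\gamma=\hat{\nabla}-\nabla$---which are \emph{not} implied by a bound on the Hessian curvature $Q=\nabla\gamma$, and which the metric $g(t_0)$ produced by Theorem \ref{t4.1} is not known to satisfy. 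Since Theorem 1.3 of \cite{JY25} is proved through Corollary 5.4, it inherits those hypotheses; this is why the paper states that Theorem \ref{t1.5} ``weakens the conditions'' of that theorem, and why its proof does not cite it directly but instead reruns the argument of \cite{JY25} \S 6 with Theorem \ref{thm2} substituted for Corollary 5.4. Your proof needs the same substitution: after producing $g(t_0)$, invoke Theorem \ref{thm2} to get $S_A=S_B$ for $g(t_0)$ without any first-derivative assumption, and then follow \S 6 of \cite{JY25}. Your proposed self-contained alternative (biholomorphism $TM\cong\mathbb{C}^n$ via \cite{Tam06} or \cite{Liu19}, then descending to $M$) is, as you yourself note, not a proof: there is no argument given that a biholomorphism of $TM$ with $\mathbb{C}^n$ respects the zero section or the real affine structure, so it cannot substitute for the flow argument on $M$.
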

\begin{proof}
By Theorem \ref{t4.1}, $M^n$ also supports a complete Hessian metric $g'$ with nonnegative and bounded Hessian sectional curvature, and its tangent bundle $(TM,g'^T)$ has maximal volume growth. Then, we can use Theorem \ref{thm2} instead of Corollary 5.4 in \cite{JY25}. Using the argument in \cite{JY25} \S 6, we can prove this theorem.
\end{proof}

\begin{lemma}\label{l4.3}
There exists $1>\alpha>0$ depending only on $n$ so that the following is true: Let $(N^n,g_0)$ be a Hessian manifold and $U\subset N$ is a precompact open set. Let $\rho>0$ be such that $B_0(x,\rho)\subset\subset N$, $|Q_{ijkl}|_{g_0}(x)\leq \rho^{-2}$ and ${\rm inj}_{g_0}(x)\geq \rho$ for all $x\in U$. Assume $U_\rho$ is nonempty. Then for any component $X$ of $U_\rho$, there is a solution $g(t)$ to the geometric flow \eqref{e1.2} on $X\times [0,\alpha \rho^2]$, where for any $\lambda>0$
\[U_\lambda:=\{x\in U|B_0(x,\lambda)\subset\subset U\},\]
with $g(t)$ satisfies the following:

\hspace*{0.1cm}\emph{(i)} $g(0)=g_0$ on $X$

\emph{(ii)}
\[\alpha g_0\leq g(t)\leq \alpha^{-1}g_0\]
\hspace*{1cm}on $X\times [0,\alpha \rho^2]$.
\end{lemma}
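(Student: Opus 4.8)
The plan is to transplant the local existence result of Lee--Tam for the Chern--Ricci flow to the tangent bundle and then pull it back to $N$ via the correspondence of Section 2. First I would pass to the complex picture: on $(TN, J_\nabla, g_0^T)$ the Hessian curvature bound $|Q_{ijkl}|_{g_0}(x)\leq \rho^{-2}$ becomes, by Proposition \ref{p2.7} (equation \eqref{sec}), a bound on the Riemannian (equivalently holomorphic bisectional) curvature of $g_0^T$ of the form $|R^T|_{g_0^T}\leq C(n)\rho^{-2}$ on $\pi^{-1}(U)$, and the injectivity radius bound $\mathrm{inj}_{g_0}(x)\geq\rho$ together with the fact that $\pi$ is a Riemannian submersion onto its image with totally geodesic (flat) fibers gives a lower injectivity radius bound on $g_0^T$ of comparable size over $\pi^{-1}(U)$. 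Since the K\"ahler condition forces the torsion $T_0$ of $g_0^T$ to vanish, the hypotheses of the local (interior) Chern--Ricci existence theorem in \cite{LT20} (their pseudolocality-type local existence lemma, analogous to the one stated here for $S_A=S_B$ plus the interior estimate in Theorem 4.2 there) are met on $\pi^{-1}(X)$ with scale $\rho$.

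Next I would invoke that local result to produce, for each component $X$ of $U_\rho$, a solution $\widetilde g(t)$ of the Chern--Ricci flow \eqref{e3.2} on $\pi^{-1}(X)\times[0,\widetilde\alpha\rho^2]$ with $\widetilde\alpha=\widetilde\alpha(n)$ and two-sided bound $\widetilde\alpha g_0^T\leq\widetilde g(t)\leq\widetilde\alpha^{-1}g_0^T$. The key point is to check that the produced solution is again of the special ``tangent bundle" form $\widetilde g(t)=g(t/4)^T$ for a metric $g(t)$ on $X$ solving \eqref{e1.2}: this follows because the Chern--Ricci flow preserves the class of metrics that are pullbacks under $\pi$ of Hessian metrics on the base (the flow is local and $\pi$-invariant; uniqueness of the flow with bounded geometry on $\pi^{-1}(X)$, which holds since $\widetilde g(t)$ is uniformly equivalent to $g_0^T$, then pins down $\widetilde g(t)$ as the pullback of the solution on $X$ obtained by solving the scalar equation \eqref{e1.3}). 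Equivalently, one solves \eqref{e1.3} directly on $X$ with the Monge--Amp\`ere machinery and reads off that its complexification is exactly the Chern--Ricci solution, so the two-sided bound on $\widetilde g$ descends to $\alpha g_0\leq g(t)\leq\alpha^{-1}g_0$ on $X\times[0,\alpha\rho^2]$ with $\alpha=\widetilde\alpha/4$. Rescaling $g_0\mapsto\rho^{-2}g_0$ reduces everything to the unit scale $\rho=1$, which is where the constants from \cite{LT20} are stated.

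The main obstacle I expect is the descent/uniqueness step: making rigorous that the local Chern--Ricci solution on the noncompact (and possibly geometrically wild near $\partial\pi^{-1}(X)$) manifold $\pi^{-1}(X)$ is honestly the $\pi$-pullback of a flow on $X$, rather than merely having the right initial data. The clean way is to avoid borrowing the complex solution as a black box and instead run the scalar parabolic Monge--Amp\`ere flow \eqref{e1.3} on the component $X$ with the localized barriers of Lee--Tam adapted to the real setting (their cutoff and maximum-principle arguments only use the local geometry controlled by $Q$ and $\mathrm{inj}$, which we have on $X$), obtaining directly a solution $\varphi$ on $X\times[0,\alpha\rho^2]$ with $\alpha g_0\leq g_0+\nabla d\varphi - t\beta(g_0)\cdot 0\leq\alpha^{-1}g_0$ — here $\beta(g_0)$ enters through the short-time term and is controlled by $|Q|_{g_0}\leq\rho^{-2}$ — and only afterwards remark that its complexification recovers $\widetilde g$. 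A secondary technical point is the injectivity-radius comparison between $g_0$ on $X$ and $g_0^T$ on $\pi^{-1}(X)$, but since the fibers are flat and totally geodesic this is standard and costs only a dimensional constant, which can be absorbed into $\alpha(n)$.
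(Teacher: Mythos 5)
The central step of your argument --- producing a flow solution on the precompact, incomplete region (whether you work on $\pi^{-1}(X)$ upstairs or on $X$ downstairs) --- is not actually justified. You assert that ``the hypotheses of the local (interior) Chern--Ricci existence theorem in \cite{LT20} are met,'' but there is no such off-the-shelf interior existence lemma to invoke: in \cite{LT20} (and in this paper) local existence on a precompact set with curvature and injectivity-radius bounds is itself \emph{proved} by Hochard's conformal-exhaustion trick, and that construction is the missing idea here. The paper's proof rescales to $\rho=1$, takes a cutoff-type function $\sigma$ with $\sigma\equiv 0$ on $U_1$ and $\sigma\geq 1$ on $\partial U$ with bounded $|\nabla\sigma|+|\nabla^2\sigma|$ (from Hochard \cite{r7}), sets $W=\{\sigma<1\}$ and $h_0=e^{2F}g_0$ with $F=\mathfrak{F}(\sigma)$ blowing up at $\partial W$, so that $h_0^T=e^{2F\circ\pi}g_0^T$ is \emph{complete} with bounded Chern curvature, bounded torsion and bounded $\bar\partial$-torsion. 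Only then can one apply the global existence result (Theorem \ref{t3.4}) on the complete manifold $\widetilde W$, and finally restrict the solution to $X$, where $F\equiv 0$ forces $h_0=g_0$. Your alternative of ``running the scalar Monge--Amp\`ere flow directly on $X$ with localized barriers'' runs into the same problem: $X$ is incomplete, so the maximum-principle machinery does not apply without either boundary data or precisely this completion device.

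Your secondary concern --- whether the Chern--Ricci solution on $\pi^{-1}(X)$ descends to a solution of \eqref{e1.2} on $X$ --- is a legitimate worry for the route you chose, but the paper avoids it entirely: the conformal change and the existence theorem (Theorem \ref{t3.4}) are both formulated for the \emph{real} flow \eqref{e1.2} on the base affine Riemannian manifold, and the tangent bundle is used only to verify the hypotheses (bounded curvature, torsion, etc.) of that theorem. Note also that after the conformal change $h_0=e^{2F}g_0$ is in general no longer Hessian, so $h_0^T$ is no longer K\"ahler and its torsion does \emph{not} vanish --- this is why Theorem \ref{t3.4} is stated for general affine Riemannian metrics with torsion bounds, and why your remark that ``the K\"ahler condition forces the torsion to vanish'' cannot be used at the point where the existence theorem is actually applied.
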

\begin{proof}
First, we find that this lemma holds for some $\rho=\widetilde{\rho}>0$ on $(N^n,g_0)$ if and only if it holds for $\rho=1$ on $(N^n,\widetilde{\rho}^{-2}g_0)$, so we can assume $\rho=1$. By the proof of Lemma 6.2 in \cite{r7}, there is a smooth function $\sigma(x)\geq 0$ on $U$ such that $\sigma(x)\equiv 0$ on $U_1$, $\sigma(x)\geq 1$ on $\partial U$, $|\nabla \sigma|+|\nabla^2 \sigma|\leq c_1$. In the proof of this lemma, we denote a positive constant depending only on $n$ by $c_i$. Define $W=\{x\in U|\sigma(x)<1\}$, then we have $X\subset U_1\subset W$. Let $\widetilde{W}$ be the component of $W$ containing $X$. Let $h_0=e^{2F}g_0$ be the Riemannian metric on $\widetilde{W}$ where $F(x)=\mathfrak{F}(\sigma(x))$ where $\mathfrak{F}$ is the function in \cite{LT20} Lemma 4.1. Note that $h_0^T=e^{2F\circ \pi}g_0^T$, by the proof of Lemma 5.1 in \cite{LT20}, $h_0^T$ is complete and has bounded curvature, the Chern curvature, the torsion $|T_0|_{h_0^T}$ and $|\bar{\partial}T_0|_{h_0^T}$ of $h_0^T$ are uniformly bounded by a constant $c_2$. By Theorem \ref{t3.4}, \eqref{e1.2} has a solution $h(t)$ on $\widetilde{W}\times [0,\alpha]$ for some $\alpha>0$ depending only $n$ such that
\[\alpha h_0\leq h(t)\leq \alpha^{-1}h_0\]
on $\widetilde{W}\times [0,\alpha]$. Let $g(t)=h(t)|_X$. Since $h_0=g_0$ on $X$ because $F=0$ there, it is easy that (i) and (ii) are true.
\end{proof}

\begin{lemma}\label{l4.4}
Suppose $(M,\nabla ,g)$ is a complete connected affine Riemannian manifold, it can be seen as a submanifold of its tangent bundle $(TM,J_\nabla,g^T)$. Then, we have:

\hspace*{0.1cm}\emph{(i)} $B^T(x,r)\cap M=B(x,r)$;

\emph{(ii)} $\pi(B^T(x,r))=B(x,r)$.\\
where $B(x,r)$ and $B^T(x,r)$ are the geodesic balls centered at $x$ with radius $r$ in $(M,\nabla ,g)$ and $(TM,J_\nabla,g^T)$ respectively.
\end{lemma}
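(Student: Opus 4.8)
The plan is to exploit the product-like structure of the Sasaki-type (or, in the Kähler-affine setting, the natural K\"ahler) metric $g^T$ on $TM$ relative to the base $M$. First I would record the explicit form of $g^T$ in affine coordinates: since $\xi^i=x^i\circ\pi$ and $\xi^{n+j}=dx^j$, writing $z^j=\xi^j+\sqrt{-1}\,\xi^{n+j}$, equation \eqref{e2.2} gives $g^T=\sum g_{ij}(\xi)\,dz^id\bar z^j$, whose real part is $\sum g_{ij}(\xi)\,(d\xi^id\xi^j+d\xi^{n+i}d\xi^{n+j})$. The key observation is that the metric coefficients depend only on the base variables $\xi^1,\dots,\xi^n$, and that $M$ sits inside $TM$ as the zero section $\{\xi^{n+1}=\dots=\xi^{2n}=0\}$, which is totally geodesic: the reflection $(\xi^i,\xi^{n+j})\mapsto(\xi^i,-\xi^{n+j})$ is an isometry of $(TM,g^T)$ fixing $M$ pointwise, so its fixed-point set $M$ is totally geodesic in $(TM,g^T)$. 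This gives immediately that any $g$-geodesic in $M$ is a $g^T$-geodesic in $TM$, hence $B(x,r)\subseteq B^T(x,r)\cap M$, and together with completeness it will give the reverse inclusion once we show no shortcut leaves $M$; but that is exactly what total geodesy (plus the fact that a minimizing geodesic in a complete manifold between two points of a closed totally geodesic submanifold stays in it, by the reflection argument) provides. This proves (i).

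For (ii), the containment $\pi(B^T(x,r))\supseteq B(x,r)$ is trivial from (i) since $\pi$ restricted to $M$ is the identity and $B(x,r)\subseteq B^T(x,r)$. For the reverse, $\pi(B^T(x,r))\subseteq B(x,r)$, I would use that $\pi\colon(TM,g^T)\to(M,g)$ is distance-nonincreasing: for any curve $\gamma(s)=(\xi(s),\eta(s))$ in $TM$ its projection is $\pi\circ\gamma(s)=\xi(s)$, and from the form of $g^T$ above,
\[
|\dot\gamma|_{g^T}^2=g_{ij}(\xi)\dot\xi^i\dot\xi^j+g_{ij}(\xi)\dot\eta^i\dot\eta^j\ \geq\ g_{ij}(\xi)\dot\xi^i\dot\xi^j=|\dot{(\pi\circ\gamma)}|_{g}^2,
\]
so $L_{g^T}(\gamma)\geq L_g(\pi\circ\gamma)$. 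Hence if $y\in B^T(x,r)$, a curve from $x$ to $y$ of $g^T$-length $<r$ projects to a curve from $x$ to $\pi(y)$ of $g$-length $<r$, i.e.\ $\pi(y)\in B(x,r)$.

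The routine parts are the coordinate computations and the distance-nonincreasing estimate; the one point that needs a little care — and which I expect to be the main (mild) obstacle — is justifying rigorously that $M$ is totally geodesic and, more precisely, that a minimizing $g^T$-geodesic between two points of $M$ cannot gain by leaving the zero section. The clean way is the isometric-reflection argument: the map $\iota\colon TM\to TM$, $\iota(v)=-v$, is well-defined independent of coordinates, preserves $J_\nabla$ up to sign and preserves $g^T$ (again because the coefficients $g_{ij}\circ\pi$ are $\iota$-invariant and $\iota$ acts as $-\mathrm{Id}$ on each fiber direction), and $\mathrm{Fix}(\iota)=M$; a standard fact then gives that $\mathrm{Fix}(\iota)$ is totally geodesic and that geodesics tangent to it at a point stay in it, which suffices. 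One should also note $TM$ with $g^T$ need not be complete in general, but here we only ever compare lengths of explicit curves, so completeness of $(M,g)$ (assumed) is all that is used, and the inclusions above hold regardless.
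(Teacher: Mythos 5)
Your central mechanism is the same as the paper's: the projection $\pi\colon (TM,g^T)\to(M,g)$ is length-nonincreasing, $L_{g^T}(\gamma)\ge L_g(\pi\circ\gamma)$, because in affine coordinates the real part of $g^T$ is $g_{ij}(\xi)\,(d\xi^id\xi^j+d\xi^{n+i}d\xi^{n+j})$ and the fibre term is nonnegative. The paper cites this inequality and uses it, together with $g^T|_M=g$, to get both inclusions in (i) and both inclusions in (ii); you actually verify the inequality by the coordinate computation, which is a welcome addition since the paper only refers to it.

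There is, however, one step in your write-up that is wrong as stated: for the inclusion $B^T(x,r)\cap M\subset B(x,r)$ you appeal to ``the fact that a minimizing geodesic in a complete manifold between two points of a closed totally geodesic submanifold stays in it, by the reflection argument.'' This is not a fact. The fixed-point set of an isometry is indeed totally geodesic, but minimizing geodesics between two of its points need not lie in it (think of two points on the neck circle of a reflection-symmetric dumbbell surface: the shortest path in the ambient surface leaves the circle, and the intrinsic distance on the fixed set strictly exceeds the ambient distance). Total geodesy only gives $B(x,r)\subset B^T(x,r)\cap M$, never the reverse inclusion, which is exactly the nontrivial direction. Fortunately the detour is unnecessary: the length-nonincreasing property of $\pi$ that you prove for (ii) gives the reverse inclusion in (i) immediately, since for $y\in M$ any curve in $TM$ from $x$ to $y$ of $g^T$-length less than $r$ projects to a curve in $M$ from $x$ to $y=\pi(y)$ of $g$-length less than $r$. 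With (i) reorganized to use that inequality (as the paper does), your proof is complete and coincides with the paper's.
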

\begin{proof}
Since $B(x,r)\in M$ and $g^T|_M=g$, we have $B(x,r)\subset B^T(x,r)\cap M$. By the proof of Lemma 6.1 and Proposition 6.3, we have
\[L_{g^T}(\gamma)\geq L_{g}(\pi(\gamma)),\]
where $\gamma$ is any curve on $(TM,J_\nabla,g^T)$. Then, $B^T(x,r)\cap M\subset B(x,r)$, we conclude that (i) is true.

By (i), we have $B(x,r)\subset \pi(B^T(x,r))$. From the above inequality, we obtain $\pi(B^T(x,r))\subset B(x,r)$, then (ii) follows.
\end{proof}

By Lemma \ref{l4.4}, the following Lemmas \ref{l4.5}-\ref{l4.8} can be derived directly from Lemmas 5.2-5.5 in \cite{LT20} using the relationship between the flow \eqref{e1.2} and the K\"{a}hler-Ricci flow, so we omit the proofs.

\begin{lemma}\label{l4.5}
For any $n,v_0>0$, there exist $\widetilde{S}(n,v_0)>0$, $C_0(n,v_0)>0$ depending only on $n,v_0$ such that the following holds: Suppose $(N^n,g(t))$ is a solution of \eqref{e1.2} on a Hessian manifold for $t\in [0,S]$ and $x_0\in N$ such that $B_t(x_0,r)\subset\subset N$ for each $t\in [0,S]$. Suppose
\[V_0(x_0,r)\geq v_0r^{2n}\;\;and\;{\rm BK}(g^T(t))\geq -r^{-2}\;on\;B_t(x_0,r),\;t\in[0,S]. \]
Then for all $t\in(0,S]\cap (0,\widetilde{S}r^2]$,
\[|{\rm Rm}(x,t)|\leq \frac{C_0}{t}\;\;and\;\;|\nabla Rm|(x,t)\leq \frac{C_0}{t^{3/2}}\;\;on\;B_t(x_0,\frac{r}{8}).\]
Moreover the injectivity radius satisfies
\[{\rm inj}_{g(t)}(x)\geq (C_0^{-1}t)^{\frac{1}{2}}\]
for $x\in B_t(x_0,\frac{r}{8})$ and $t\in [0,S]\cap [0,\widetilde{S}r^2]$.
\end{lemma}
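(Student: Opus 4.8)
The strategy is to transport the whole problem to the tangent bundle, where it becomes a statement about the K\"{a}hler--Ricci flow, invoke the corresponding estimate of Lee--Tam, and carry the conclusions back to $N$ via Lemma~\ref{l4.4} and Proposition~\ref{p2.7}. Since $g_0$ is Hessian, $g_0^T$ is K\"{a}hler by Proposition~\ref{p2.6}, and by the computation in Section~3 the reparametrized family $\widetilde g(t):=g^T(t/4)$ solves the Chern--Ricci flow \eqref{e3.2} on $(TN,J_\nabla,g_0^T)$, which here is the K\"{a}hler--Ricci flow. So the plan is to check the hypotheses of Lemma~5.2 of \cite{LT20} for $\widetilde g(t)$ on a suitable ball in $TN$, apply it, and descend.

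For the hypotheses: applying Lemma~\ref{l4.4} with the metric $g(t)$ gives $B_t^T(x_0,r)\cap N=B_t(x_0,r)$ and $\pi(B_t^T(x_0,r))=B_t(x_0,r)$, so from $\overline{B_t(x_0,r)}\subset\subset N$ and the local boundedness of $g(t)$ (which controls the fibre directions) one gets $B_t^T(x_0,r)\subset\subset TN$ for each $t\in[0,S]$. The volume hypothesis $V_0(x_0,r)\geq v_0 r^{2n}$ is already an assertion about $g_0^T$. For the curvature hypothesis, the identity \eqref{sec} of Proposition~\ref{p2.7} reads $R^T_{i\bar{j}k\bar{l}}(g^T(t))=-\tfrac{1}{2}Q_{ijkl}(g(t))\circ\pi$, so the curvature of $g^T(t)$ at a point $y\in TN$ is the $\pi$-pullback of the Hessian curvature of $g(t)$ at $\pi(y)$; hence ${\rm BK}(g^T(t))\geq -r^{-2}$ on $B_t(x_0,r)$ automatically propagates to all of $\pi^{-1}(B_t(x_0,r))\supset B_t^T(x_0,r)$, as needed.

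Now apply Lemma~5.2 of \cite{LT20} to $\widetilde g(t)$ on this ball; absorbing the harmless factor $4$ coming from $\widetilde g(t)=g^T(t/4)$ into the constants yields $\widetilde S(n,v_0)>0$ and $C_0(n,v_0)>0$ such that $|{\rm Rm}(g^T(t))|\leq C_0/t$ and $|\nabla{\rm Rm}(g^T(t))|\leq C_0/t^{3/2}$ on $B_t^T(x_0,r/8)$ for $t\in(0,S]\cap(0,\widetilde S r^2]$, together with ${\rm inj}_{g^T(t)}(y)\geq(C_0^{-1}t)^{1/2}$ there. To descend to $N$, note that by Lemma~\ref{l4.4} again $B_t^T(x_0,r/8)\cap N=B_t(x_0,r/8)$, so all three bounds hold at every $x\in B_t(x_0,r/8)$; differentiating the identity \eqref{sec} and using Proposition~\ref{p2.4} turns the bounds on $R^T(g^T(t))$ into the asserted bounds on $Q(g(t))$ and its covariant derivative, while a one-line Koszul computation with \eqref{e2.2} --- the coefficients $g_{ij}\circ\pi$ depend only on the base coordinates --- shows the zero section $N$ is totally geodesic in $(TN,g^T(t))$, so $\exp^{g(t)}_x$ is the restriction of $\exp^{g^T(t)}_x$ and ${\rm inj}_{g(t)}(x)\geq{\rm inj}_{g^T(t)}(x)\geq(C_0^{-1}t)^{1/2}$. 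This is essentially a change of viewpoint, so I anticipate no deep obstacle; the points needing genuine care are the precompactness $B_t^T(x_0,r)\subset\subset TN$ (the fibre directions, not merely the base directions via $\pi$, must be controlled), the bookkeeping of the time rescaling by $4$ and the resulting dependence of $\widetilde S$ and $C_0$ on $n$ and $v_0$, and the total-geodesy fact used to transfer the injectivity radius bound --- which is presumably why the paper states this lemma without proof.
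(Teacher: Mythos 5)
Your proposal is correct and follows exactly the route the paper intends: the paper omits the proof, stating only that Lemmas \ref{l4.5}--\ref{l4.8} follow from Lemmas 5.2--5.5 of \cite{LT20} via Lemma \ref{l4.4} and the correspondence between \eqref{e1.2} and the K\"ahler--Ricci flow on $(TN,J_\nabla,g^T)$, which is precisely your lift-apply-descend argument. Your additional care about the precompactness of $B_t^T(x_0,r)$, the time rescaling by $4$, and the total geodesy of the zero section supplies details the paper leaves implicit, and all three points check out.
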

\begin{lemma}\label{l4.6}
For any $A_1,n>0$, there exists $C_1(n,A_1)$ depending only on $n,A_1$ such that the following holds: For any Hessian manifold $(N^n,g_0)$(not necessarily complete), suppose $g(t),t\in [0,S]$ is a solution of \eqref{e1.2} on $B_0(x_0,r)$ where $B_0(x_0,r)\subset N$ such that on $B_0(x_0,r)$
\[|Rm_{g(0)}|\leq A_1 r^{-2}\;\;and\;\;|\nabla_{g(0)}{\rm Rm}(g_0)|\leq A_1 r^{-3}.\]
Assume in addition that on $B_0(x_0,r)$,
\[A_1^{-1}g_0\leq g(t)\leq A_1 g_0.\]
Then on $B_0(x_0,\frac{r}{8})$, $t\in[0,S]$,
\[|{\rm Rm}|(g(t))\leq C_1 r^{-2}.\]
\end{lemma}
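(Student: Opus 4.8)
The plan is to lift the statement to the tangent bundle and quote the corresponding local curvature estimate for the K\"{a}hler--Ricci flow from \cite{LT20}. By Proposition~\ref{p2.6}, $g_0^T$ is K\"{a}hler on $(TM,J_\nabla)$, and, as computed in Section~3, the time rescaling $\widetilde g(t):=g^T(t/4)$ converts a solution of \eqref{e1.2} into a solution of the Chern--Ricci flow \eqref{e3.2} (which, the metric being K\"{a}hler, is the K\"{a}hler--Ricci flow). Since $g(\cdot)$ lives on $B_0(x_0,r)\subset N$, the lifted flow $g^T(\cdot)$ lives on $\pi^{-1}(B_0(x_0,r))$, which by Lemma~\ref{l4.4}(ii) contains the metric ball $B_0^T(x_0,r)$ of $(TM,g_0^T)$; hence $\widetilde g(t)$ is a K\"{a}hler--Ricci flow on $B_0^T(x_0,r)$ for $t\in[0,4S]$.

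First I would transfer the hypotheses to $TM$. Proposition~\ref{p2.7} identifies the K\"{a}hler curvature of $g^T$ with the Hessian curvature tensor of $g$, namely $R^T_{i\bar j k\bar l}(g^T)=-\tfrac12 Q_{ijkl}\circ\pi$; differentiating this identity and using Proposition~\ref{p2.4}, the curvature bounds on $g_0$ over $B_0(x_0,r)$ become $|{\rm Rm}(g_0^T)|\le c(n)A_1 r^{-2}$ and $|\nabla^T{\rm Rm}(g_0^T)|\le c(n)A_1 r^{-3}$ over $\pi^{-1}(B_0(x_0,r))\supset B_0^T(x_0,r)$. The pointwise equivalence $A_1^{-1}g_0\le g(t)\le A_1 g_0$ on $B_0(x_0,r)$ lifts verbatim through \eqref{e2.2} to $A_1^{-1}g_0^T\le g^T(t)\le A_1 g_0^T$, i.e.\ $\widetilde g(t)$ is uniformly equivalent to $\widetilde g(0)$ with the same constant. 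It is precisely this \emph{a priori} equivalence that upgrades the conclusion from a short-time statement to one valid on the whole interval.

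Now the K\"{a}hler--Ricci flow analogue of this lemma, Lemma~5.3 of \cite{LT20}, applies on $B_0^T(x_0,r)$ and gives $|{\rm Rm}(g^T(s))|\le C(n,A_1)r^{-2}$ on $B_0^T(x_0,r/8)$ for all $s\in[0,S]$. Restricting to $M\subset TM$ and invoking Lemma~\ref{l4.4}(i), namely $B_0^T(x_0,r/8)\cap M=B_0(x_0,r/8)$, together with Proposition~\ref{p2.7} applied to the Hessian metric $g(s)$, yields $|Q_{ijkl}|_{g(s)}\le 2C(n,A_1)r^{-2}$ on $B_0(x_0,r/8)$; Proposition~\ref{p2.4}(3) then converts this into the asserted bound on the Riemannian curvature of $g(s)$ with $C_1=C_1(n,A_1)$.

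The only genuine analysis is inside the cited estimate, so if one wanted a self-contained argument on $M$ the main obstacle would be a local Shi-type estimate directly for \eqref{e1.2}: write the evolution equation $\partial_t |Q|^2_{g(t)}=\Delta_{g(t)}|Q|^2-2|\nabla Q|^2+(\text{terms cubic in }Q)$, multiply by a spatial cutoff supported in $B_0(x_0,r)$ built from the $g_0$-distance (comparable to the $g(t)$-distance by the equivalence hypothesis), and run the maximum principle, absorbing the cubic term and the derivatives of the cutoff via the metric equivalence and the usual device of controlling $|Q|^2+(\text{const})|Q|$ simultaneously. The awkward point is that one cannot use geodesic normal coordinates on the affine manifold $M$, which is exactly why reducing to $(TM,g^T)$ and quoting \cite{LT20} is the clean route.
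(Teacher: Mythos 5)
Your proposal is correct and follows essentially the same route as the paper: the paper omits the proof, stating only that Lemma \ref{l4.6} "can be derived directly from" Lemma 5.3 of \cite{LT20} via Lemma \ref{l4.4} and the correspondence $\widetilde g(t)=g^T(t/4)$ between \eqref{e1.2} and the K\"ahler--Ricci flow, which is exactly the reduction you carry out (and you usefully fill in the transfer of hypotheses and the restriction back to $M$). The only minor imprecision is the bookkeeping of the factor $4$ in the rescaled time interval, which affects nothing but constants.
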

\begin{lemma}\label{l4.7}
Suppose $(N^n,g(t))$ is a solution of \eqref{e1.2} on $[0,S]$ with $g(0)=g_0$, where $g_0$ is a Hessian metric. Let $x_0\in N$, and $r$ be such that $B_0(p,r)\subset\subset N$. Suppose there exists $a>0$ such that

\hspace*{0.1cm}\emph{(i)} $g_0$ has nonnegative Hessian sectional curvature on $B_0(x_0,r)$;

\emph{(ii)} $|{\rm Rm}|(g(t))\leq \frac{a}{t}$ on $B_0(x_0,r)\times (0,S]$.\\
There exists $\hat{S}>0$ depending only on $n,a$ such that for all $t\in[0,S]\cap [0,\hat{S}r^2]$, $x\in B_t(x_0,\frac{r}{8})$,
\[{\rm BK}(x,t)\geq -r^{-2}.\]
\end{lemma}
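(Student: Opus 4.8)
The plan is to deduce Lemma~\ref{l4.7} from the corresponding local preservation estimate for the holomorphic bisectional curvature along the K\"ahler--Ricci flow (Lemma~5.4 of \cite{LT20}), transported to the present setting through the tangent bundle correspondence, just as for Lemmas~\ref{l4.5}--\ref{l4.6}. First I would record the correspondence: since $g_0$ is Hessian, $g_0^T$ is K\"ahler by Proposition~\ref{p2.6}; the flow \eqref{e1.2} keeps $g(t)$ Hessian, so each $g^T(t)$ is K\"ahler; and the computation preceding \eqref{e3.2} shows that $\widetilde g(s):=g^T(s/4)$ solves the K\"ahler--Ricci flow \eqref{e3.2} with $\widetilde g(0)=g_0^T$, for $s\in[0,4S]$.

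Next I would transport the hypotheses to the tangent bundle over the lifted ball $B_0^T(x_0,r)$. By Lemma~\ref{l4.4}(ii) we have $\pi\bigl(B_0^T(x_0,r)\bigr)=B_0(x_0,r)$; since the coefficients $(g_0)_{ij}\circ\pi$ of $g_0^T$ and the curvature coefficients $R^T_{i\bar jk\bar l}=-\tfrac12 Q_{ijkl}\circ\pi$ (Proposition~\ref{p2.7}) depend only on the base point, hypothesis~(i) is precisely the statement that $g_0^T$ has nonnegative holomorphic bisectional curvature on $B_0^T(x_0,r)$, while hypothesis~(ii), combined with the fact that the full curvature tensor of the K\"ahler metric $g^T(t)$ is algebraically determined by its mixed components $R^T_{i\bar jk\bar l}$, yields $|{\rm Rm}(g^T(t))|\le c(n)a/t$ there. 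After the rescaling $s=4t$ this reads $|{\rm Rm}(\widetilde g(s))|\le a'/s$ with $a'=a'(n,a)$, and a local Shi-type estimate on a slightly smaller ball provides whatever bound on $|\nabla\,{\rm Rm}(\widetilde g(s))|$ is needed as input for \cite{LT20}.

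With these in hand I would apply Lemma~5.4 of \cite{LT20} to $\widetilde g$ on $B_0^T(x_0,r)$, obtaining $\hat S=\hat S(n,a')>0$ such that ${\rm BK}(\widetilde g(s))\ge -r^{-2}$ on $B_s^T(x_0,\tfrac r8)$ for $s\in[0,4S]\cap[0,\hat Sr^2]$. Undoing the rescaling (so $\widetilde g(s)=g^T(t)$ with $s=4t$, and $B_s^T(x_0,\tfrac r8)$ is the $g^T(t)$--geodesic ball) and restricting to $N\subset TN$ via Lemma~\ref{l4.4}(i), which gives $B_t^T(x_0,\tfrac r8)\cap N=B_t(x_0,\tfrac r8)$, one arrives at ${\rm BK}(x,t)\ge -r^{-2}$ for all $x\in B_t(x_0,\tfrac r8)$ and $t\in[0,S]\cap[0,\tfrac14\hat Sr^2]$, which is the assertion of the lemma after renaming $\tfrac14\hat S$ as $\hat S$ (still depending only on $n$ and $a$).

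The one point I would treat with care --- and what I view as the actual content behind ``derived directly from'' \cite{LT20} --- is that $B_0^T(x_0,r)$ is \emph{not} precompact in $TN$, the fibers of $\pi$ being noncompact, so the local maximum principle of \cite{LT20} is not literally applied on a precompact domain. This is circumvented by the observation already used above: the metric coefficients, the curvature tensor, hence the least holomorphic bisectional curvature of $g^T(t)$ and the reaction--diffusion quantity to which the maximum principle is applied, are all $\pi$--pullbacks of objects on the \emph{precompact} ball $B_0(x_0,r)\subset N$; moreover, for any function $F$ on $B_0(x_0,r)$ the $g^T(t)$--Laplacian of $F\circ\pi$ equals $\tfrac14\bigl(g^{ij}(t)\,\partial_i\partial_j F\bigr)\circ\pi$, the pullback of the image of $F$ under a time-dependent elliptic operator on $B_0(x_0,r)$. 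Thus the entire cutoff/barrier construction and the maximum principle of \cite{LT20} descend to the precompact set $B_0(x_0,r)$, where they go through unchanged. The remaining bookkeeping --- matching the cutoff radii, using the curvature bound to check that $B_t^T(x_0,\tfrac r8)\subset\subset\pi^{-1}(B_0(x_0,r))$ on the relevant time interval, and tracking the factor $\tfrac14$ from the rescaling --- is routine.
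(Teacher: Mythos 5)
Your proposal follows exactly the route the paper intends: the paper omits the proof of Lemma \ref{l4.7}, stating only that it is derived from Lemma 5.4 of \cite{LT20} via the tangent bundle correspondence and Lemma \ref{l4.4}, and your argument carries out precisely that reduction (with more care about the lift and descent than the paper records). The proposal is correct and takes essentially the same approach.
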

\begin{lemma}\label{l4.8}
There exists a constant $\beta=\beta(m)\geq 1$ depending only on $m$ such that the following is true. Suppose $(N^m,g(t))$ is a solution of \eqref{e1.2} for $t\in[0,S]$ and $x_0\in N$ with $B_0(x_0,r)\subset\subset N$ for some $r>0$, and $\beta(g(t))\leq (2m-1)a/t$ on $B_0(x_0,r)$ for each $t\in(0,S]$. Then
\[B_t(x_0,r-\beta \sqrt{at})\subset B_0(x_0,r).\]
\end{lemma}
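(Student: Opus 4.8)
I would prove this distance--distortion estimate by transplanting it to the tangent bundle, where the flow \eqref{e1.2} becomes the Chern--Ricci flow \eqref{e3.2} and the hypothesis on the form $\beta(g(t))$ becomes an honest Ricci upper bound. Set $\tilde g(t)=g^T(t/4)$, which by the computation preceding \eqref{e3.2} solves the Chern--Ricci flow on $(TM,J_\nabla,g_0^T)$; since $g_0$ is Hessian, $g_0^T$ is K\"ahler by Proposition \ref{p2.6} and the flow preserves the K\"ahler condition, so $\tilde g(t)$ is in fact a K\"ahler--Ricci flow. By Proposition \ref{p2.7}, ${\rm Ric}(g^T(s))=\tfrac14\beta(g(s))$, so the assumption $\beta(g(t))\le \tfrac{(2m-1)a}{t}\,g$ on $B_0(x_0,r)$ transfers, after the time rescaling $s=t/4$, to
\[
{\rm Ric}(\tilde g(t))\le \frac{(2m-1)a}{t}\,\tilde g(t)
\]
on the corresponding set in $TM$, i.e.\ a bound of the form ${\rm Ric}\le(\dim_{\mathbb R}TM-1)(a/t)$ on the real $2m$--dimensional manifold $TM$. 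This is exactly the hypothesis of the standard distance lemma (Lemma 5.5 of \cite{LT20}).

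With this reduction the plan is to run the Hamilton--type distance estimate for $\tilde g$. Fix $x$, write $r(\tau)=d^T_{\tilde g(\tau)}(x_0,x)$, and away from the cut locus use the first variation of arc length, $\frac{d}{d\tau}r(\tau)=-\tfrac12\int_{\gamma}{\rm Ric}(T,T)\,ds$ along the minimizing $\tilde g(\tau)$--geodesic $\gamma$ with unit tangent $T$. Naively bounding the integral by (Ricci bound)$\times$(length) only yields a multiplicative estimate; the point is to use the second--variation argument with cut--off vector fields supported within $\tilde g(\tau)$--distance $r_0\sim(\tau/a)^{1/2}$ of each endpoint, which bounds the contribution of the long middle portion of $\gamma$ by $\sim (2m-1)(r_0^{-1}+(a/\tau)r_0)$ independently of the total length. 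Optimizing in $r_0$ gives
\[
\frac{d}{d\tau}d^T_{\tilde g(\tau)}(x_0,x)\ \ge\ -C(m)\sqrt{\frac{a}{\tau}}.
\]
The bound is singular at $\tau=0$ but integrable: integrating from $0$ to $t$ yields $d^T_0(x_0,x)\le d^T_t(x_0,x)+\beta(m)\sqrt{at}$ with $\beta(m)\ge1$, i.e.\ $B^{\tilde g}_t(x_0,r-\beta\sqrt{at})\subset B^{\tilde g}_0(x_0,r)$ on $TM$. I would then descend to $M$ by intersecting with $M$ and invoking Lemma \ref{l4.4}(i), $B^T(x_0,\rho)\cap M=B(x_0,\rho)$ for each fixed metric, which converts the containment into $B_t(x_0,r-\beta\sqrt{at})\subset B_0(x_0,r)$ on $M$ (the harmless factor coming from $\tilde g(t)=g^T(t/4)$ is absorbed into $\beta(m)$). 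Since $r\le\beta\sqrt{at}$ makes the statement vacuous, one may always assume $r\gtrsim r_0$, so the geodesics in question are long enough for the second--variation step to apply.

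Two technical issues require care. The distance function is only Lipschitz, so the displayed differential inequality must be interpreted in the barrier sense via Calabi's trick, replacing $\frac{d}{d\tau}r$ by the appropriate forward difference quotient along a minimizing geodesic chosen at each time; this is routine. The genuine obstacle is the \emph{locality} of the hypothesis: the Ricci bound holds only on $B_0(x_0,r)$, whereas the variational argument needs the minimizing $\tilde g(\tau)$--geodesics from $x_0$ to $x$ to stay inside that good region. I would close this with a continuity/bootstrap argument in $\tau$. The target containment $B_\tau(x_0,r-\beta\sqrt{a\tau})\subset B_0(x_0,r)$ holds trivially at $\tau=0$ and is an open condition; as long as it holds, any point $x$ with $d_\tau(x_0,x)<r-\beta\sqrt{a\tau}$ has its minimizing geodesic contained in the metric ball $B_\tau(x_0,r-\beta\sqrt{a\tau})\subset B_0(x_0,r)$, so the curvature bound is available precisely where it is needed to propagate the estimate. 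A maximal--time argument then shows the containment persists on all of $[0,S]$. It is this self--improving structure, rather than the distance computation itself, that is the delicate point.
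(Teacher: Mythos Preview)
Your approach is essentially the paper's: transfer the problem to $(TM,g^T)$ where \eqref{e1.2} becomes the K\"ahler--Ricci flow, invoke Lemma~5.5 of \cite{LT20} (whose proof via the Hamilton--Perelman second--variation argument you correctly outline), and descend to $M$ via Lemma~\ref{l4.4}. The paper omits all details, but your handling of the time rescaling, the passage of the curvature bound from $B_0(x_0,r)\subset M$ to $B^T_0(x_0,r)\subset\pi^{-1}(B_0(x_0,r))$ on $TM$ (implicitly via Lemma~\ref{l4.4}(ii)), and the continuity--bootstrap to keep geodesics in the good region are all sound.
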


We are ready to prove the theorem.
\begin{proof}[Proof of Theorem \ref{t4.1}]
The theorem can be proved using almost the same methods as Theorem 5.1 respectively in \cite{LT20} where
the complex case was considered, we only need to use Lemmas \ref{l4.3}, \ref{l4.5}-\ref{l4.8} instead of Lemmas 5.1-5.5 in \cite{LT20}, and use the relationship between the flow \eqref{e1.2} and the K\"{a}hler-Ricci flow, so we omit their proof.
\end{proof}



\end{document}